\newtheorem{theorem}{Theorem}
\newtheorem{lemma}{Lemma}
\newtheorem{prop}[theorem]{Proposition}
\theoremstyle{remark}
\newtheorem{remark}{Remark}
\renewcommand{\leq}{\leqslant}
\renewcommand{\geq}{\geqslant}
\title{Character sums over products of prime polynomials}
\author[S. Porritt]{Sam Porritt}
\address{Department of Mathematics\\University College London\\
	25 Gordon Street, London, England}
\email{porritt.samuel@gmail.com}
\begin{document}
\onehalfspacing
\maketitle
\vspace{-0.8cm}
\begin{abstract}
We study sums of Dirichlet characters over polynomials in $\mathbb{F}_q[t]$ with a prescribed number of irreducible factors. Our main results are explicit formulae for these sums in terms of zeros of Dirichlet $L$-functions. We also exhibit new phenomena concerning Chebyshev-type biases of such sums when the number of irreducible factors is very large.
\end{abstract}
\vspace{-0.2cm}
\section{Introduction}
\subsection{Set-up}
Let $\mathbb{F}_q[t]$ be the polynomial ring in one variable over the finite field $\mathbb{F}_q$ and $\mathcal{M}, \mathcal{P}$ be the subsets of monic and prime monic polynomials, respectively. Let $\mathcal{M}_n$ be the set of monic polynomials of degree $n$. This paper concerns the sums of non-trivial Dirichlet characters $\chi : \mathbb{F}_q[t] \rightarrow \mathbb{C}$
$$\pi_{k}(n,\chi)=\sum_{\substack{f \in \mathcal{M}_n \\ \Omega(f)=k }}\chi(f),$$
where $\Omega(f)$ is the number of prime divisors of $f$ counted with multiplicity. We shall use analytic arguments to relate the quantity $\pi_{k}(n,\chi)$ to the zeros of $L(u,\chi)$, the $L$-function associated to $\chi$, which is defined as $$L(u,\chi):=\sum_{f \in \mathcal{M}}\chi(f)u^{\deg f} = \prod_{p \in \mathcal{P}}(1-\chi(p)u^{\deg p})^{-1}.$$
It is known that the analogue of the Riemann hypotheis holds for such $L$-functions and in particular, using the same notation as~\cite[Equation (1)]{L-M}, we can factor the polynomial $L(u,\chi)$ as a product of linear factors
\begin{equation}
L(u,\chi)=:(1-\sqrt{q}u)^{m_{+}}(1+\sqrt{q}u)^{m_{-}}\prod_{j=1}^{d_\chi}(1-\alpha_j(\chi)u)^{m_j}\prod_{j'=1}^{d'_\chi}(1-\beta_{j'}(\chi)u)
\end{equation}
where $|\beta_{j'}(\chi)|=1$ and $\alpha_j(\chi) = \sqrt{q}e^{i\gamma_j(\chi)}$ is non-real and has absolute value $\sqrt{q}$. See for example, \cite[Proposition 4.3]{Rosen}. The $\alpha_j$ are distinct for distinct $j$ and appear with multiplicity $m_j$. For our purposes, the $\beta_{j'}$ are less important. We are interested in uniformity with respect to the variables $n$ and $k$ so, everywhere apart from equation (2) below, implied constants may depend on anything except $n$ and $k$ (in particular, on $\chi$ and $q$). It is convenient to use the normalisation
$$\widetilde{\pi_{k}}(n,\chi):=\pi_{k}(n,\chi)\frac{n(k-1)!(-1)^k}{q^{n/2}(\log n)^{k-1}}.$$

\subsection{Brief background on Chebyshev's bias}
Assume for the time being that $L(\pm q^{-1/2},\chi) \neq 0$ and that each zero of $L(u, \chi)$ is simple. The corresponding assumption, that $L(1/2,\chi) \neq 0$ and all zeros are simple, is conjectured to hold for all number field Dirichlet $L$-functions. Then it follows from the work of Devin and Meng~\cite{L-M}, and Theorems~\ref{thm4.1} and~\ref{thm4.3} below, that for each \emph{fixed} $k$,
\begin{equation}
\widetilde{\pi_{k}}(n,\chi) = \textbf{1}_{\chi^2 = \chi_0}\frac{1+(-1)^n}{2^k}+\sum_{\gamma_j \neq 0, \pi}e^{i n \gamma_j} +O_k\left(\frac{1}{\log n}\right).
\end{equation}
The terms in this formula corresponding to non-real zeros of $L(u,\chi)$ oscillate around 0 as $n$ increases. We can think of the other term as a `bias' term which biases $\pi_k(n,\chi)$ away from 0. It follows that, if $\chi$ is real then for each fixed $k$, the quantity $(-1)^k\pi_k(n,\chi)$ is biased towards being positive rather than negative, but that ``as
$k$ increases, the biases become smaller and smaller"--\cite{L-M}. The results of this paper include and generalise~(2) to $k(n)$ varying with $n$. We shall see that the bias term does indeed tend to $0$ for certain sequences $k(n) \rightarrow \infty$, but, perhaps surprisingly, not if $k(n)$ grows too quickly. In particular, there is a constant $\gamma \approx 1.2021\ldots$ such that if $\gamma < k/\log n < \sqrt{q}$, then the bias term is larger than the oscillating terms for every large even $n$.

In the case that $k=1$, the integer analogue of the explicit formula (2) has a corresponding bias term that is responsible for the phenomena known as \emph{Chebyshev's bias}, named after Chebyshev who observed in 1853 that ``There is a notable difference in the splitting of the prime numbers between the
two forms $4n + 3$ and $4n + 1$: the first form contains a lot more than the second". Chebyshev's observation was formalised by Rubenstein and Sarnak~\cite{R-S} who proved, under certain conjectures on the zeros of Dirichlet $L$-functions, that, if 
$$S_k = \Big\{ x \in \mathbb{N} \: : \: (-1)^k\sum_{\substack{n \leq x \\ \Omega(n) = k}}\chi(n) > 0 \Big\}$$
where $\chi$ is the non-trivial Dirichlet character mod $4$, then the set $S_1$ has logarithmic density $\approx 0.996$. Figure 1. shows the plots of the normalised character sum
$$\widetilde{\pi_k^{\text{int}}}(x) = \frac{(-1)^k(k-1)!\log x}{\sqrt{x} (\log\log x)^{k-1}} \sum_{\substack{n \leq x \\ \Omega(n) = k}}\chi(n).$$
\begin{figure}[H]
	\centering
	\begin{subfigure}{.5\textwidth}
		\centering
		\includegraphics[scale=0.26]{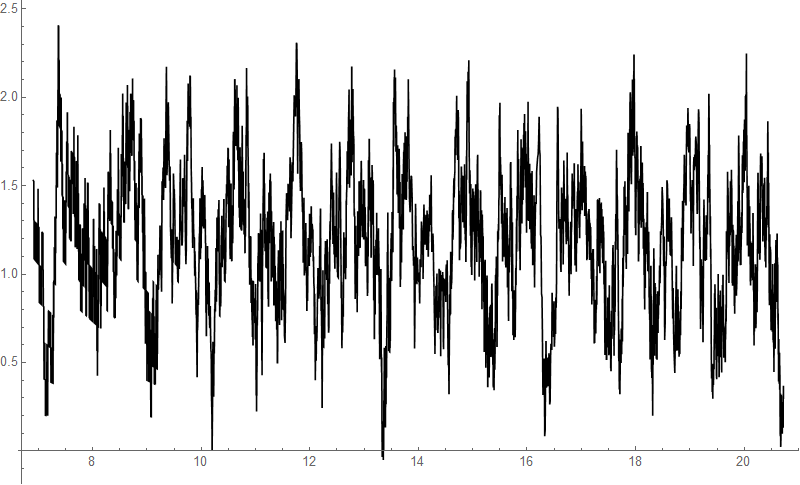}
		\caption{$k=1$}
	\end{subfigure}%
	\begin{subfigure}{.5\textwidth}
		\centering
		\includegraphics[scale=0.26]{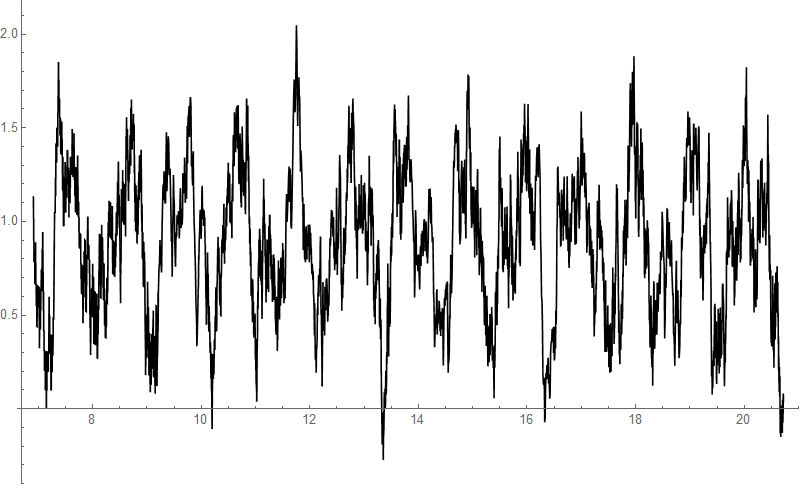}
		\caption{$k=2$}
	\end{subfigure}\\
	\begin{subfigure}{.5\textwidth}
		\centering
		\includegraphics[scale=0.26]{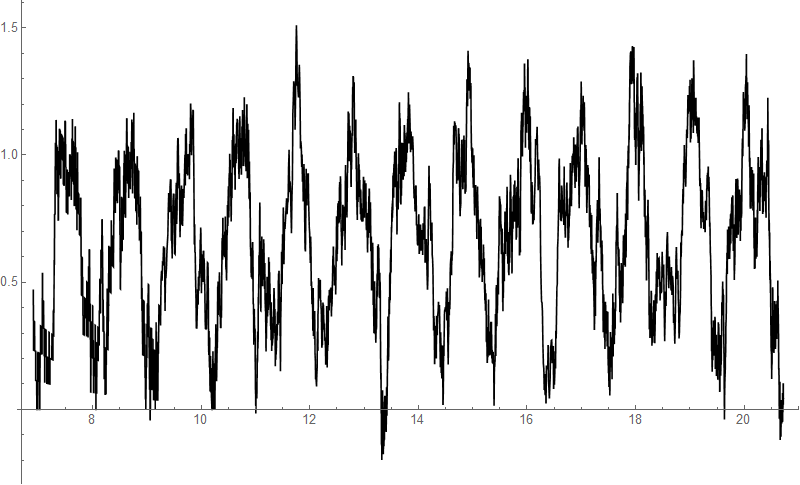}
		\caption{$k=3$}
	\end{subfigure}%
	\begin{subfigure}{.5\textwidth}
		\centering
		\includegraphics[scale=0.26]{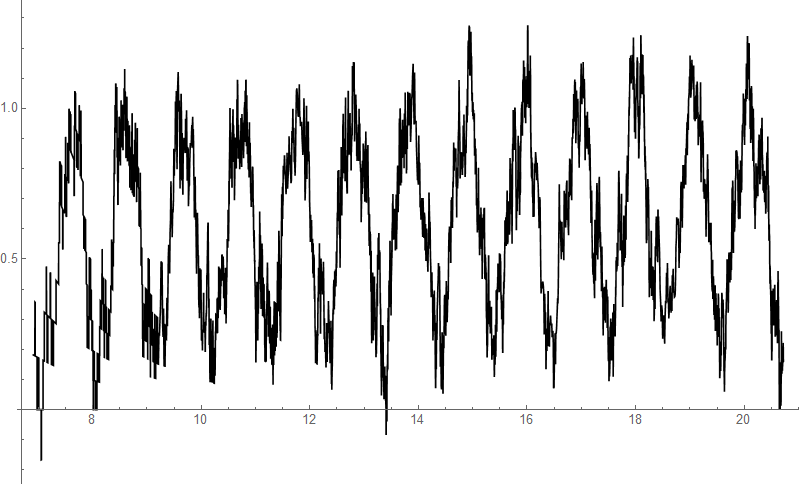}
		\caption{$k=4$}
	\end{subfigure}
	\caption{Plots of $\widetilde{\pi_k^{\text{int}}}(x)$ for $x$ up to $10^9$ plotted on a logarithmic scale}
\end{figure}
The phenomena has since been extensively studied and generalised in a number of directions. See~\cite{G-M} for an introduction to the topic. Ford and Sneed~\cite{F-S} proved, again under certain conjectures on the zeros of $L(s,\chi)$, that the set $S_2$ has logarithmic density $\approx 0.894$. A more general theorem capturing the Chebyshev bias for fixed $k \geq 1$ was proved for integers by Meng~\cite{M} and for polynomials by Devin and Meng~\cite{L-M}. For example, it follows from~\cite{M} and standard conjectures on the zeros of Dirichlet $L$-functions that the set $S_k$ has logarithmic density $\delta_k$ where $1/2< \delta_k < 1$ and $\delta_k \rightarrow 1/2$ as $k \rightarrow \infty$. This suggests that in some sense the bias dissipates as $k \rightarrow \infty$. However, we shall see that, for polynomials and $k(n)$ increasing with $n$, sometimes the bias is strong enough to ensure that $(-1)^k\pi_k(n,\chi) >0 $ for all large even $n$.

From now on, $\chi$ will always denote a non-principal Dirichlet character modulo a fixed polynomial $d \in \mathbb{F}_q[t]$ and $\chi_0$ will denote the principal character mod $d$. It may be helpful to bear in mind that for large values of $n$, the distribution of $\Omega(f)$, given a polynomial $f$ selected uniformly at random from $\mathcal{M}_n$, is approximately normal with mean $\log n$ and standard deviation $\sqrt{\log n}$.

Because the behaviour of $\pi_k(n, \chi)$ depends on whether or not $\chi$ is real, we shall present the results for the two cases separately.

\subsection{When $\chi$ is not real}

\begin{theorem}\label{thm4.1}
	Suppose $\chi^2 \neq \chi_0$ and let $\epsilon >0$. With the same notation as in equation (1),
	$$
	\widetilde{\pi_k}(n,\chi)= \sum_{\gamma_j\neq 0, \pi}m_j^ke^{i n \gamma_j} + m_{+}^k +(-1)^n m_{-}^k
	+  O\left( \frac{k}{\log n}\left(\sum_{\gamma_j \neq 0, \pi}m_j^k + m_{+}^k + m_{-}^k \right) \right)
	$$
	uniformly for $ 1\leq k \leq q^{1/2-\epsilon} \log n$.
\end{theorem}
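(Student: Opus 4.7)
My starting point is the bivariate generating function
\[F(u,z,\chi) := \sum_{f\in\mathcal{M}}\chi(f)z^{\Omega(f)}u^{\deg f} = \prod_{p\in\mathcal{P}}(1-z\chi(p)u^{\deg p})^{-1},\]
whose $[u^n z^k]$-coefficient is $\pi_k(n,\chi)$. Matching the $m=1$ prime-power terms in $\log F$ and $z\log L$ yields the factorisation
\[F(u,z,\chi) = L(u,\chi)^z\cdot G(u,z,\chi),\qquad G(u,z,\chi) := \exp\!\Big(\sum_{m\geq 2}\tfrac{z^m-z}{m}\sum_{p}\chi(p)^m u^{m\deg p}\Big),\]
in which $G$ is holomorphic in $u$ for $|u|<q^{-1/2}$, entire in $z$, and satisfies $G(u,0,\chi)=G(u,1,\chi)=1$. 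Writing $G=\sum_{j\geq 0}G_j(u)z^j$ and extracting $[z^k]$ via the identity $[z^k]L(u,\chi)^z = (\log L(u,\chi))^k/k!$ gives
\[\pi_k(n,\chi) = \sum_{j=0}^{k}\frac{1}{(k-j)!}\,[u^n]\Big((\log L(u,\chi))^{k-j}\,G_j(u)\Big).\]

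The dominant contribution is the $j=0$ term $[u^n](\log L)^k/k!$. Using equation~(1), I write $\log L(u,\chi)=\sum_\rho m_\rho\log(1-\rho u)$, with $\rho$ running over $\pm\sqrt{q}$ (multiplicities $m_\pm$), the $\alpha_j=\sqrt{q}e^{i\gamma_j}$ (multiplicities $m_j$), and the $\beta_{j'}$ (each of multiplicity $1$). Expanding $(\log L)^k$ multinomially separates the ``pure'' contributions $m_\rho^k(\log(1-\rho u))^k$ from ``mixed'' terms involving products of $\log(1-\rho u)$ at distinct $\rho$. To each pure term I would apply the uniform asymptotic (standard for Stirling numbers of the first kind)
\[[u^n]\frac{(-\log(1-\rho u))^k}{k!}=\rho^n\,\frac{(\log n)^{k-1}}{n(k-1)!}\Big(1+O\big(\tfrac{k}{\log n}\big)\Big),\qquad 1\leq k\leq c\log n,\]
sum over $\rho\in\{\sqrt{q},-\sqrt{q}\}\cup\{\alpha_j\}$, and renormalise by $n(k-1)!(-1)^k/(q^{n/2}(\log n)^{k-1})$ to recover the main terms $m_+^k$, $(-1)^n m_-^k$ and $\sum_j m_j^ke^{in\gamma_j}$; the pure contribution from the $\beta_{j'}$ with $|\beta_{j'}|=1$ is of size $O(n^{-1})$ after renormalisation and is absorbed into the error.

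It then remains to bound the mixed multinomial terms and the $j\geq 1$ contributions from $G$. By singularity analysis, $[u^n]\prod_\rho(\log(1-\rho u))^{k_\rho}$ at a configuration with at least two nonzero $k_\rho$ is captured to leading order by the dominant singularities $|\rho^*|=\sqrt q$, with size $\ll q^{n/2}(\log n)^{k_{\rho^*}-1}/(n(k_{\rho^*}-1)!)$ times bounded evaluations of the remaining factors; since $k_{\rho^*}\leq k-1$ for mixed terms, each loses a factor $\log n$ compared with the pure term. Summing over multinomial configurations with weights $\binom{k}{\{k_\rho\}}\prod m_\rho^{k_\rho}$ and renormalising gives a total of order $\bigl((1+O(1/\log n))^k-1\bigr)\sum_\rho m_\rho^k\ll (k/\log n)\sum_\rho m_\rho^k$, matching the claimed error. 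The $j\geq 1$ contributions from $G$ are handled analogously, each losing an extra factor $(\log n)^{-j}$ from having fewer logarithms in $(\log L)^{k-j}$, with the Taylor coefficients $G_j(u)$ bounded in terms of $\chi$ and $q$ alone.

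The main obstacle is obtaining uniformity in $k$ throughout the full range $k\leq q^{1/2-\epsilon}\log n$: the elementary bounds above handle $k\ll\log n$, but extending further requires a refined accounting of cross-contributions involving the small roots $\beta_{j'}$, for which the gain $|\beta_{j'}|/\sqrt q=q^{-1/2}$ competes with combinatorial factors growing in $k$. The resulting geometric-type series has effective expansion parameter of order $k/(q^{1/2}\log n)$, and the condition $k\leq q^{1/2-\epsilon}\log n$ is precisely what keeps this parameter bounded away from $1$, ensuring convergence of the estimates throughout the claimed range.
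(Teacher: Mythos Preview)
Your plan takes a genuinely different route from the paper. The paper first evaluates $M_z(n,\chi)=\sum_k\pi_k(n,\chi)z^k$ by a contour shift in $u$ (Proposition~\ref{prop4.1}), obtaining an asymptotic uniform for $|z|\leq q^{1/2-\epsilon}$, and then recovers $\pi_k$ by a saddle-point integral in $z$ (Lemma~\ref{l4.1}) at $r=(k-1)/(m_\rho\log n)$. All interactions between different zeros are thereby absorbed into the single holomorphic factor $F_{-z}(\rho,\chi)c_\rho^{-z}/\Gamma(1+zm_\rho)$, and one Taylor step $f(r)=f(0)+O(r)=m_\rho+O(k/\log n)$ replaces your entire mixed-term bookkeeping. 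Your route---expanding $(\log L)^k$ multinomially and doing singularity analysis in $u$ term by term---could in principle be pushed through but carries considerably more combinatorial overhead, and the paper's order of operations is what makes the argument short.

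Two points in your plan need correcting. First, $G(u,z,\chi)$ is not entire in $z$: since $\bigl|\sum_p\chi(p)^m u^{m\deg p}\bigr|\asymp q|u|^m$ for $|u|$ near $q^{-1/2}$, the exponent $\sum_{m\geq 2}\frac{z^m-z}{m}S_m(u)$ converges only for $|zu|<1$, i.e.\ for $|z|<q^{1/2}$. (You also need $G$ holomorphic in $u$ slightly \emph{beyond} $|u|=q^{-1/2}$, not merely up to it, for the Hankel contours; this holds once $|z|\leq q^{1/2-\epsilon}$, cf.\ the paper's passage from $F_z$ to $E_z$.) This radius of analyticity in $z$ is exactly what governs the decay $|G_j(u)|\ll q^{-(1/2-\epsilon)j}$ and hence the convergence of your geometric series in $j$; it is the true source of the restriction $k\leq q^{1/2-\epsilon}\log n$. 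Second, your attribution of that threshold to ``cross-contributions involving the small roots $\beta_{j'}$'' is misplaced: in your multinomial expansion the evaluations $\log(1-\beta_{j'}/\rho)$ at a dominant singularity $\rho$ are bounded constants depending only on $\chi$ and $q$, no different in kind from the factors $\log(1-\alpha_i/\rho)$ coming from the other large zeros, and they do not by themselves produce a barrier at $k\sim q^{1/2}\log n$.
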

When $k$ is large with respect to $n$, this improves upon \cite[Theorem 5.1]{L-M}, which requires that $k = o(\sqrt{\log n})$ and has an error term involving a factor $(\deg d)^k$.

Theorem~\ref{thm4.1} gives a main term and smaller error term in the range $k = o(\log n)$. A more general formula that describes the behaviour of $\widetilde{\pi_k}(n,\chi)$ in the full range $1\leq k\leq q^{1/2-\epsilon}\log n$ can be extracted from the proof but is significantly more complicated to write down. If we assume certain simplifying assumptions, though, we can state the more general formula as follows.
\begin{theorem}\label{thm4.2}
	Suppose that $\chi^2 \neq \chi_0$ and that $m_j = 1$ for each $j$ and $m_{\pm}=0$. Suppose $k(n)$ is a sequence such that $\alpha = \lim_{n \rightarrow \infty}\frac{k}{\log n}$ exists and $0 \leq \alpha <q^{1/2}$.
	Then there exist non-zero constants $h_j(\alpha)$ such that
	$$\widetilde{\pi_k}(n,\chi) = \sum_{\gamma_j \neq 0, \pi}h_j(\alpha) e^{i n \gamma_j} + o(1).$$
\end{theorem}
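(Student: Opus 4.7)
The plan is to apply the Selberg--Delange method (equivalently, Flajolet--Sedgewick singularity analysis) to the bivariate generating function
\[
F(u,z)=\sum_{f\in\mathcal M}\chi(f)z^{\Omega(f)}u^{\deg f}=\prod_{p}(1-z\chi(p)u^{\deg p})^{-1},
\]
so that $\pi_k(n,\chi)=[u^n z^k]F(u,z)$. Splitting the Euler-product logarithm at $m=1,2$ gives the factorisation
\[
F(u,z)=L(u,\chi)^{z}\,L(u^2,\chi^2)^{(z^2-z)/2}\,H(u,z),
\]
where $H(u,z)$ absorbs the $m\ge 3$ tail. A routine estimate using the prime polynomial theorem shows that $H$ is analytic in $u$ for $|u|<q^{-1/3}$ and in $z$ for $|z|<\sqrt q$. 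Under $m_j=1$ and $m_\pm=0$, the factor $L(u^2,\chi^2)$ is analytic and nonvanishing on $|u|=q^{-1/2}$ (its zeros sit on $|u|=q^{-1/4}$ and $|u|=1$), so the only singularities of $F$ on that circle are the simple branch points at $u=1/\alpha_j$ inherited from $L(u,\chi)^z$.

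For the $[u^n]$ step, I would use Hankel contours around each $1/\alpha_j$ joined to an outer arc $|u|=q^{-1/2+\eta}$ with $0<\eta<1/6$. Writing $F(u,z)=(1-\alpha_j u)^z\widetilde G_j(u,z)$ locally, with $\widetilde G_j$ analytic at $1/\alpha_j$, the standard transfer theorem gives, uniformly on compact subsets of $|z|<\sqrt q$,
\[
[u^n]F(u,z)=\sum_{j}\frac{\alpha_j^{n}\,\widetilde G_j(1/\alpha_j,z)}{n^{z+1}\,\Gamma(-z)}\bigl(1+O(1/n)\bigr)+O\bigl(q^{n(1/2-\eta)}\bigr).
\]
Setting $M_j(z):=\widetilde G_j(1/\alpha_j,z)/\Gamma(-z)$, which is analytic in $|z|<\sqrt q$, and expanding $M_j(z)=\sum_m a_m^{(j)}z^m$, the $[z^k]$ extraction becomes
\[
[z^k]M_j(z)n^{-z}=\frac{(-\log n)^k}{k!}\sum_{m=0}^{k}a_m^{(j)}\prod_{i=0}^{m-1}\frac{k-i}{-\log n}.
\]
Each finite product tends to $(-\alpha)^m$ as $k/\log n\to\alpha$; the Cauchy bound $|a_m^{(j)}|\ll\rho^{-m}$ for any $\rho<\sqrt q$, combined with the hypothesis $\alpha<\sqrt q$, supplies a dominating geometric series, and the inner sum converges to $M_j(-\alpha)$.

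Combining the two steps and applying the normalisation $n(k-1)!(-1)^k/(q^{n/2}(\log n)^{k-1})$ (noting $(-\log n)^k(-1)^k=(\log n)^k$, $\alpha_j^n/q^{n/2}=e^{in\gamma_j}$, and $(k-1)!/k!=1/k$), one obtains
\[
\widetilde{\pi_k}(n,\chi)=\sum_{\gamma_j\neq 0,\pi}\frac{\log n}{k}\,M_j(-\alpha)\,e^{in\gamma_j}+o(1)\longrightarrow\sum_{\gamma_j\neq 0,\pi}h_j(\alpha)e^{in\gamma_j},
\]
with $h_j(\alpha)=M_j(-\alpha)/\alpha=\widetilde G_j(1/\alpha_j,-\alpha)/\Gamma(1+\alpha)$ (using $z\Gamma(z)=\Gamma(z+1)$). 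The apparent singularity at $\alpha=0$ cancels because $1/\Gamma(-z)=-z+O(z^2)$, giving $h_j(0)=1$ and matching Theorem~\ref{thm4.1}. Nonvanishing follows because $\Gamma(1+\alpha)\neq 0$, the factors $(1-\alpha_i/\alpha_j)^{-\alpha}$ and $(1-\beta_{j'}/\alpha_j)^{-\alpha}$ are nonzero ($\alpha_i\neq\alpha_j$ for $i\neq j$, and $|\beta_{j'}|=1\neq\sqrt q=|\alpha_j|$), the factor $L(1/\alpha_j^2,\chi^2)^{(\alpha^2+\alpha)/2}$ is nonzero (since $|1/\alpha_j^2|=1/q$ lies strictly inside the no-zero region of $L(\cdot,\chi^2)$), and $H(1/\alpha_j,-\alpha)$ is the exponential of a convergent series.

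The main obstacle is arranging the transfer theorem to hold uniformly in $z$ on a disc of radius approaching $\sqrt q$, and propagating that uniformity through the $[z^k]$ extraction so that the $O(1/n)$ and $O(q^{n(1/2-\eta)})$ errors remain $o(1)$ after multiplication by the normalising factor $n(k-1)!/(q^{n/2}(\log n)^{k-1})$. The hypothesis $\alpha<\sqrt q$ is used precisely here: it keeps the dominant $z$-saddle $\approx-\alpha$ strictly inside the disc of analyticity of the $M_j$, and a Stirling-based estimate shows that the normalising factor is only polynomially large at the saddle, so the exponential saving $q^{-n\eta}$ comfortably absorbs it.
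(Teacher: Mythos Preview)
Your proposal is correct and follows essentially the same Selberg--Delange strategy as the paper: the factorisation $G_z(u,\chi)=L(u,\chi)^zL(u^2,\chi^2)^{z(z-1)/2}E_z(u,\chi)$, Hankel contours around each $\rho=1/\alpha_j$ to obtain $[u^n]$, and then extraction of the $z^k$-coefficient, arriving at the identical constant $h_j(\alpha)=\widetilde G_j(1/\alpha_j,-\alpha)/\Gamma(1+\alpha)=F_{-\alpha}(\rho,\chi)c_\rho^{-\alpha}/\Gamma(1+\alpha)$.

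The one genuine methodological difference is in the $[z^k]$ step. The paper packages this as a saddle-point lemma (Lemma~\ref{l4.1}): apply Cauchy's formula on the circle $|z|=r=(k-1)/\log n$, expand $f(z)=f(r)+f'(r)(z-r)+O(|z-r|^2)$, and integrate term by term. You instead write out the Cauchy product $[z^k]M_j(z)n^{-z}=\sum_{m\le k}a_m^{(j)}(-\log n)^{k-m}/(k-m)!$ and pass to the limit by dominated convergence using the Cauchy bounds on the $a_m^{(j)}$. Both compute the same thing; the paper's version is more quantitative (it delivers an explicit $O(k/(\log n)^2)$ rate, which is what feeds Theorem~\ref{thm4.1}), whereas your argument yields only the qualitative $o(1)$ needed here. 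One presentational wrinkle: in the displayed line $\widetilde{\pi_k}(n,\chi)=\sum_j\frac{\log n}{k}M_j(-\alpha)e^{in\gamma_j}+o(1)$ you have already replaced the inner sum by its limit $M_j(-\alpha)$, which makes the formula literally false when $\alpha=0$ (it reads $0$); the fix you indicate---factor $k/(-\log n)$ out of the inner sum first, using $a_0^{(j)}=0$, so that the product $\frac{\log n}{k}\cdot S_j(n)$ becomes $-\sum_{m\ge1}a_m^{(j)}\prod_{i=1}^{m-1}\frac{k-i}{-\log n}$ before taking limits---is exactly right and should be stated that way.
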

Recall that the assumptions $m_j=1$ for each $j$ and $m_{\pm}=0$ is conjectured to hold for number field Dirichlet $L$-functions.
\begin{remark}
	The coefficients $h_j(\alpha)$ are explicitly defined in terms of $\alpha$ and $\chi$ in the course of the proof but are quite lengthy to write down in full. An even more general formula that does not require the limit $\lim_{n \rightarrow \infty}\frac{k}{\log n}$ to exist can be extracted from the proof but is more complicated to write down.
\end{remark}

\subsection{When $\chi$ is real}

The next two theorems show how the behaviour of $\pi_k(n,\chi)$ differs significantly when $\chi$ is real. The first deals with `small' values of $k$ and again extends the range of a formula given in~\cite{L-M}.
\begin{theorem}\label{thm4.3}
	Suppose $\chi^2 = \chi_0$. With the same notation as in equation (1), uniformly for $1\leq k \leq (\log n)^{1/2}$ we have
	\begin{multline*}
	\widetilde{\pi_k}(n,\chi)= \sum_{\gamma_j\neq 0, \pi}m_j^ke^{i n \gamma_j} + (m_{+}+1/2)^k+(-1)^n(m_{-}+1/2)^k \\
	+  O\left(\tfrac{k}{\log n}\sum_{\gamma_j \neq 0, \pi}m_j^k + \tfrac{k^2}{\log n}\max_{\pm}\left\{(m_{\pm}+1/2)^k \right\} \right).
	\end{multline*}
	Moreover, uniformly in the range $1\leq k \leq (\log n)^{2/3}$ we have
	\begin{multline*}
	\widetilde{\pi_k}(n,\chi)= \sum_{\gamma_j\neq 0, \pi}m_j^ke^{i n \gamma_j}
	+ (m_{+}+1/2)^ke^{\tfrac{(k-1)^2}{2(m_{+}+1/2)^2\log n}}+(-1)^n(m_{-}+1/2)^ke^{\tfrac{(k-1)^2}{2(m_{-}+1/2)^2\log n}} \\
	+  O\left( \tfrac{k}{\log n}\sum_{\gamma_j \neq 0, \pi}m_j^k + \left(\tfrac{1}{k}+\tfrac{k^3}{(\log n)^2}\right)\max_{\pm}\left\{ (m_{\pm}+1/2)^ke^{\tfrac{(k-1)^2}{2(m_{\pm}+1/2)^2\log n}} \right\} \right).
	\end{multline*}
\end{theorem}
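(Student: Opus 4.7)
The plan is to write $\pi_k(n,\chi) = [u^n z^k]F(u,z)$ for the bivariate generating function
$$F(u,z) := \prod_{p \in \mathcal{P}}\frac{1}{1 - z\chi(p)u^{\deg p}} = \sum_{f \in \mathcal{M}}\chi(f)z^{\Omega(f)}u^{\deg f},$$
and extract this coefficient via singularity analysis in $u$ followed by a Hermite-polynomial-type asymptotic in $z$. Taking the logarithm of the Euler product and comparing with $z\log L(u,\chi)$ yields $F(u,z) = L(u,\chi)^z\exp\bigl(\sum_{m\geq 2}\frac{z^m-z}{m}\sum_p\chi(p)^mu^{m\deg p}\bigr)$. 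Because $\chi^2 = \chi_0$, the $m=2$ term equals $\frac{z^2-z}{2}\sum_{p\nmid d}u^{2\deg p}$; by the prime polynomial theorem this is $-\frac{z^2-z}{2}\log(1-qu^2)$ plus a function analytic on $|u|<q^{-1/4}$, while the $m\geq 3$ terms are analytic on $|u|<q^{-1/3}$. Combined with the $L$-factorisation~(1) this gives
$$F(u,z) = (1-\sqrt{q}u)^{A_+(z)}(1+\sqrt{q}u)^{A_-(z)}\prod_j(1-\alpha_j u)^{zm_j}\prod_{j'}(1-\beta_{j'}u)^{z}H(u,z),$$
with $A_\pm(z) := z(m_\pm + 1/2) - z^2/2$, and $H(u,z)$ analytic in $u$ on $|u|<q^{-1/4}$ satisfying $H(u,0)\equiv 1$. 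The $+1/2$ shift in $A_\pm$, responsible for the $(m_\pm+1/2)^k$ main terms, is precisely the $m=2$ contribution and is invisible in the non-real case of Theorem~\ref{thm4.1}.

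Next I apply the Flajolet--Sedgewick transfer theorem on a $\Delta$-domain at each algebraic singularity on $|u|=q^{-1/2}$ (the $\beta_{j'}^{-1}$ lie on $|u|=1$ and contribute only lower order), obtaining
$$[u^n]F(u,z) = q^{n/2}\sum_{\bullet} e^{in\theta_\bullet}C_\bullet(z)\frac{n^{-\mu_\bullet(z)-1}}{\Gamma(-\mu_\bullet(z))}\bigl(1 + O(n^{-\epsilon})\bigr),$$
where $(\theta_\bullet,\mu_\bullet(z)) \in \{(0,A_+(z)),(\pi,A_-(z)),(\gamma_j,zm_j)\}$ and each $C_\bullet(z) = 1 + O(z)$ is the value of the remaining analytic factors at the corresponding singularity. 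Using $\frac{1}{\Gamma(-\mu)} = -\mu(1 + \gamma\mu + O(\mu^2))$ and $n^{-\mu-1} = n^{-1}e^{-\mu\log n}$, the $(1-\alpha_j u)$-type contributions with $\mu(z) = zm_j$ are extracted by straightforward expansion in $z$: after $[z^k]$ and normalisation they produce $\sum_j m_j^k e^{in\gamma_j}$, with the $\gamma\mu$ correction accounting for the $O(k/\log n)$ relative error in the theorem's first error term.

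The main work is to compute $[z^k](-A_\pm(z))e^{-A_\pm(z)\log n}$ with $a := m_\pm + 1/2$. Completing the square, $-A_\pm(z)\log n = -a^2\log n/2 + (\log n/2)(z-a)^2$; direct expansion gives
$$[z^m]e^{-A_\pm(z)\log n} = \frac{(-a\log n)^m}{m!}\sum_{l=0}^{\lfloor m/2\rfloor}\frac{m!/(m-2l)!}{l!\,(2a^2\log n)^l}.$$
Using $m!/(m-2l)! = m^{2l}(1 + O(l^2/m))$ and recognising the inner sum as a Poisson-type series of mean $m^2/(2a^2\log n)$, I conclude that $[z^m]e^{-A_\pm(z)\log n} = \frac{(-a\log n)^m}{m!}e^{m^2/(2a^2\log n)}(1 + O(m^3/(\log n)^2))$. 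Since $-A_\pm(z) = -az + z^2/2$, the dominant contribution to $[z^k]$ comes from the $-a[z^{k-1}]$ piece, producing the main term $a^k e^{(k-1)^2/(2a^2\log n)}$ after normalisation. The subleading $\tfrac12[z^{k-2}]$ piece, the $\gamma\mu$ correction from $1/\Gamma$, and the $C_\bullet(z) = 1 + O(z)$ factor each contribute relative errors $O(k/\log n)$ which, in the range $k \leq (\log n)^{2/3}$, are absorbed into the stated $O(1/k + k^3/(\log n)^2)$ (since $k/\log n \leq 1/k + k^3/(\log n)^2$ throughout this range). The first, coarser formula then follows by additionally approximating $e^{(k-1)^2/(2a^2\log n)} = 1 + O(k^2/\log n)$, valid on the smaller range $k \leq \sqrt{\log n}$.

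The main technical obstacle is the uniform bookkeeping across the range $k \leq (\log n)^{2/3}$: in particular, propagating the $O(l^2/m)$ correction in the falling-factorial approximation through the Poisson-type weighted sum to obtain the claimed relative error $O(m^3/(\log n)^2)$ in the Hermite-type asymptotic, and verifying that the $O(n^{-\epsilon})$ transfer-theorem remainders together with the absorbed $m\geq 3$ contributions in $H(u,z)$ do not spoil these estimates.
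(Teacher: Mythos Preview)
Your approach is correct and structurally the same as the paper's: both factor the bivariate generating function as $L(u,\chi)^z L(u^2,\chi_0)^{z(z-1)/2}$ times an analytic correction (the paper's equation~(4) and Proposition~\ref{prop4.1}), extract $[u^n]$ by singularity analysis at the points on $|u|=q^{-1/2}$, and then extract $[z^k]$ from each resulting summand. The one genuine methodological difference is in how you extract $[z^{k-1}]$ from $n^{az^2+bz}$ with $a=\tfrac12$, $b=m_\pm+\tfrac12$. The paper proves a saddle-point lemma (Lemma~\ref{l4.2}): it writes the coefficient as $\frac{1}{2\pi i}\oint n^{az^2+bz}z^{-k}\,dz$, chooses the radius $r$ solving $r^2+\tfrac{b}{2a}r=\tfrac{k-1}{2a\log n}$, and evaluates by Laplace's method; parts~(a) and~(b) then follow by Taylor-expanding $r$ for small $k/\log n$. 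Your route instead expands the exponential directly as a Hermite-type double sum in $l$ and approximates $m!/(m-2l)!\approx m^{2l}$, recognising a truncated exponential series in $\lambda=m^2/(2a^2\log n)$. Both are valid in the stated range. The paper's saddle-point route has the advantage that its part~(c), covering the full range $k\asymp\log n$ needed for Theorem~\ref{thm4.4}, comes first and parts~(a)--(b) are corollaries; your direct expansion is more elementary here but would not extend to $k\asymp\log n$, since then $\lambda\asymp k$ and the falling-factorial approximation fails at the dominant~$l$. The error-propagation obstacle you flag is real but routine: split at $l\leq\lambda+\lambda^{1/2+\epsilon}$, bound the tail trivially, and note that the resulting $O(\lambda^2/m + \lambda/m)$ relative error is indeed $O(m^3/(\log n)^2 + m/\log n)$, the second piece being absorbed by your AM--GM observation.
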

Notice that the error terms in the formulae above are essentially the same as the corresponding main terms but with certain extra factors involving $k$ and $\log n$. It is not too difficult to see that these extra factors are $o(1)$ in the first formula if $k = o(\sqrt{\log n})$. They are $o(1)$ in the second if $k \rightarrow \infty$ and $k = o((\log n)^{2/3})$. The change in behaviour and appearance of extra terms for $k$ around $\sqrt{\log n}$ may explain why~\cite[Theorem 5.1]{L-M} required $k = o(\sqrt{\log n}).$ The significance of the threshold $k$ around $(\log n)^{2/3}$ will become apparent from the proof.

If we make the same simplifying assumptions as in Theorem~\ref{thm4.2}, that is, the limit $ \lim_{n\rightarrow} \frac{k}{\log n}$ exists and $m_j = 1$, ${m_{\pm}} = 0$, then we get a corresponding version for real $\chi$ that includes the bias term. The size of the bias term can be described in terms of the continuous function $b(\alpha)$ defined by
\begin{equation}
b(\alpha) = \alpha\left(\frac{s(\alpha)-1}{2}-\log(2s(\alpha))\right), \hspace{0.4cm} \text{ where } \hspace{0.4cm} s(\alpha) = \frac{1}{8\alpha}\left(\sqrt{1+16\alpha} - 1\right).
\end{equation}
\begin{figure}[H]
	{
		\centering
		\includegraphics[scale=0.45]{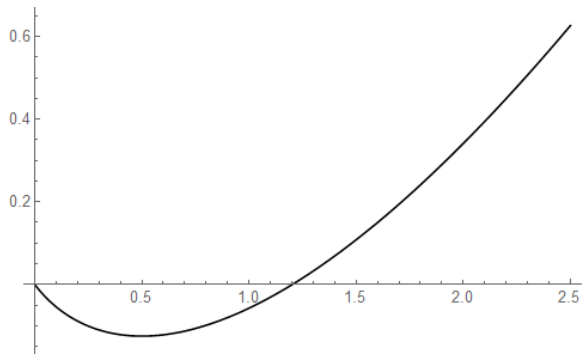}
		\caption{Plot of $b(\alpha)$}
	}
\end{figure}
Of particular significance is the fact that, if the real constants $\beta \approx 0.3637\ldots$ and $\gamma \approx 1.2021\ldots$ are defined by the two equations $$e^{\beta-1}=4\beta^2  \hspace{0.5cm} \text{ and } \hspace{0.5cm}  \gamma = \frac{1-\beta}{4\beta^2},$$
then
$b(\alpha)<0$ for $0 < \alpha < \gamma$ and $b(\alpha) > 0$ for $ \alpha > \gamma$.
\begin{theorem}\label{thm4.4}
	Suppose $\chi^2 = \chi_0$ and that $m_j = 1$ for each $j$ and $m_{\pm}=0$. Suppose $k(n)$ is a sequence such that $\alpha = \lim_{n \rightarrow \infty}\frac{k}{\log n}$ exists and $0 < \alpha <q^{1/2}$.
	Then there exist non-zero constants $h_j(\alpha)$ such that
	$$\widetilde{\pi_k}(n,\chi) = \left\{h_{+}(\alpha)+(-1)^n h_{-}(\alpha) + o(1)\right\}n^{b((k-1)/\log n)} + \sum_{\gamma_j \neq 0, \pi}h_j(\alpha) e^{i n \gamma_j} + o(1)$$
	where
	$$h_{\pm}(\alpha) = P_{\pm}(r)\frac{2^{-r(r + 1)/2}}{\Gamma(r(r + 1)/2)}\frac{L(\pm q^{-1/2},\chi)^{-r}}{\sqrt{1+r^2/\alpha}}$$
	$$P_{\pm}(r) = \prod_{p \in \mathcal{P}}\left(1+\frac{r \chi(p)(\pm 1)^{\deg p}}{q^{\deg p/2}}\right)^{-1}\left(1-\frac{\chi(p)(\pm1)^{\deg p}}{q^{\deg p /2}}\right)^{-r}\left(1-\frac{1}{q^{\deg p}}\right)^{r(r+1)/2}$$
	and $r=r(\alpha)$ is the positive root of
	$$r^2 + \frac{r}{2} - \alpha = 0.$$
\end{theorem}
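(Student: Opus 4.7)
The plan is a bivariate Selberg--Delange analysis of the generating function
\[
F(u,z) \;:=\; \prod_{p\in\mathcal{P}}(1-z\chi(p)u^{\deg p})^{-1} \;=\; \sum_{n,k}\pi_k(n,\chi)\,u^n z^k,
\]
applying Cauchy's formula $\pi_k(n,\chi) = (2\pi i)^{-2}\oint\oint F(u,z)u^{-n-1}z^{-k-1}\,du\,dz$ and then singularity analysis in $u$ followed by a saddle-point analysis in $z$.

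Using that $\chi^{2}=\chi_0$, a direct manipulation of the local factor $(1-z\chi(p)u^{\deg p})^{-1}(1-\chi(p)u^{\deg p})^{z}=1+\binom{z}{2}u^{2\deg p}+O(u^{3\deg p})$, together with the fact that $\sum_{p\nmid d}u^{2\deg p}$ differs from $-\log(1-qu^{2})$ by a function analytic in $|u|<q^{-1/4}$, produces the factorisation
\[
F(u,z) \;=\; L(u,\chi)^{z}\,(1-qu^{2})^{-z(z-1)/2}\,K(u,z),
\]
where $K(u,z) = \prod_{p}(1-z\chi(p)u^{\deg p})^{-1}(1-\chi(p)u^{\deg p})^{z}(1-u^{2\deg p})^{z(z-1)/2}$ is an absolutely convergent Euler product; inspection of its Euler factors at $u=\pm q^{-1/2}$, $z=-r$ gives $K(\pm q^{-1/2},-r) = P_{\pm}(r)$. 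Under the hypotheses $m_{j}=1$ and $m_{\pm}=0$, the only singularities of $F$ on $|u|=q^{-1/2}$ are branch points of $L(u,\chi)^{z}$ at $u=\alpha_{j}^{-1}$ (of order $z$) and of $(1-qu^{2})^{-z(z-1)/2}$ at $u=\pm q^{-1/2}$ (of order $z(z-1)/2$); the $\beta_{j'}^{-1}$ on $|u|=1$ contribute only lower-order terms. Flajolet--Odlyzko transfer (Hankel contours) around each singularity yields the explicit contribution to $[u^{n}]F(u,z)$: $q^{n/2}e^{in\gamma_j}n^{z-1}/\Gamma(z)$ times an analytic factor from each $\alpha_{j}^{-1}$, and $(\pm1)^{n}q^{n/2}n^{z(z-1)/2-1}/\Gamma(z(z-1)/2)\cdot 2^{-z(z-1)/2}L(\pm q^{-1/2},\chi)^{z}K(\pm q^{-1/2},z)\cdot(1+o(1))$ from $u=\pm q^{-1/2}$.

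For the outer $[z^{k}]$ extraction, the change of variable $w=-z$ (natural in view of the $(-1)^{k}$ factor in the normalisation of $\widetilde{\pi_k}$) turns each bias contribution into a Cauchy integral with phase $\phi(w) = \tfrac{w(w+1)}{2}\log n - \log\Gamma(\tfrac{w(w+1)}{2}) - (k+1)\log w$, whose unique positive real saddle $w=r(\alpha)+o(1)$ is the positive solution of $r^{2}+r/2=\alpha$. A Laplace expansion on the steepest-descent circle $|w|=r$ recovers the asserted formula: the $n$-dependence $n^{b((k-1)/\log n)}$ comes from the saddle value of $\phi$ combined via Stirling with the normalising factor $(k-1)!/(\log n)^{k-1}$, the resulting exponent equaling $b(\alpha)\log n$ by the very definition of $b$; the Gaussian prefactor $(1+r^{2}/\alpha)^{-1/2}$ comes from the curvature $\phi''(r)\sim\log n\cdot(4r+1)/(2r)$; the Hankel integral contributes $2^{-r(r+1)/2}/\Gamma(r(r+1)/2)$; and $L(\pm q^{-1/2},\chi)^{-r}K(\pm q^{-1/2},-r) = L(\pm q^{-1/2},\chi)^{-r}P_{\pm}(r)$ complete the constants $h_{\pm}(\alpha)$. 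The same saddle analysis at each $\alpha_{j}^{-1}$ singularity, where the phase is the simpler $w\log n-\log\Gamma(w)-(k+1)\log w$, gives the oscillating terms $h_{j}(\alpha)e^{in\gamma_j}$ with $h_{j}$ a non-zero constant.

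The principal obstacle is the uniformity of the Laplace expansion in $\alpha$ across $(0,\sqrt q)$. As $\alpha\to\sqrt q$ the saddle $r$ approaches the boundary of the domain of analyticity of $K(u,-w)$, and one must give sharp estimates to show $P_{\pm}(r)$ remains finite and produces a non-zero $h_{\pm}$. As $\alpha\to 0$ the quantity $r(r+1)/2$ approaches a pole of $1/\Gamma$ and the naive Stirling approximation of $\log\Gamma(r(r+1)/2)$ fails, forcing a sharper local expansion. Throughout, the corrections to the saddle equation arising from the digamma $\psi(w(w+1)/2)$ and from the slowly-varying Euler product factors must be shown to shift $r$ by only $O(1/\log n)$ so that the leading asymptotic is preserved, and one must verify that the circle $|w|=r$ genuinely serves as a steepest-descent contour throughout the full range.
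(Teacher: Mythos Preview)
Your plan is essentially the paper's own argument. The paper likewise factors the bivariate generating function as $L(u,\chi)^{z}L(u^{2},\chi_0)^{z(z-1)/2}E_{z}(u,\chi)$ (your $(1-qu^{2})^{-z(z-1)/2}K(u,z)$ differs only by absorbing $\prod_{p\mid d}(1-u^{2\deg p})^{z(z-1)/2}$ into the Euler product), extracts $[u^{n}]$ via Hankel contours around each singularity on $|u|=q^{-1/2}$ (its Proposition~\ref{prop4.1}), and then obtains $[z^{k-1}]$ of $z^{-1}M_{-z}(n,\chi)$ by a saddle-point analysis on the circle $|z|=r$ with $r^{2}+r/2=(k-1)/\log n$ (its Lemma~\ref{l4.2}(c)). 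The identification of the constants $h_{\pm}(\alpha)$, the curvature factor $(1+r^{2}/\alpha)^{-1/2}$, and the exponent $b((k-1)/\log n)$ via Stirling are carried out exactly as you describe. One small correction: as $\alpha\to 0$ the point $r(r+1)/2$ approaches a \emph{zero} of $1/\Gamma$ (a pole of $\Gamma$), not a pole of $1/\Gamma$; but since the theorem assumes $0<\alpha<q^{1/2}$ strictly, neither boundary needs to be handled and the paper does not treat them.
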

\begin{remark}
	It follows that the oscillating terms dominate if $0 < \alpha <\gamma$, but that the bias terms dominate if $\gamma < \alpha <q^{1/2}$, at least if $n$ is even. This is perhaps surprising given the results in~\cite{M} and~\cite{L-M}, which, as explained in section 1.1 above, suggest that in some sense ``as
	$k$ increases, the biases become smaller and smaller". Since the conditions we assumed on the zeros of $L(u,\chi)$ are conjectured to hold for number field Dirichlet $L$-functions, we wonder whether something analogous happens in the number field setting. 
\end{remark}
\begin{remark}
	The remark following Theorem~\ref{thm4.2} concerning the coefficients $h_j(\alpha)$ applies verbatim to the coefficients $h_j(\alpha)$ in Theorem 4.
\end{remark}

\section{A Selberg-Delange type argument}

Let $A>0$ and $u$ and $z$ be complex variables satisfying $|z|\leq A$ and $|zu|<1$. Deploying the Selberg-Delange method to study the quantity $\pi_k(n,\chi)$ entails studying the auxillary quantity
$$ M_z(n,\chi) := \sum_{f \in \mathcal{M}_n}\chi(f) z^{\Omega(f)} = \sum_{k\geq 0}z^k \pi_k(n,\chi)$$
and then recovering $\pi_k(n,\chi)$ from $M_z(n,\chi)$ using Cauchy's integral formula. Now $\chi(f) z^{\Omega(f)}$ is a multiplicative function of $f$ and has Dirichlet series with Euler product given by
$$G_z(u,\chi):=\sum_{f \in \mathcal{M}}\chi(f)z^{\Omega(f)}u^{\deg f} = \prod_{p \in \mathcal{P}}\left(1-z \chi(p) u^{\deg p}\right)^{-1}.$$
Since $|\mathcal{P}_n| \leq q^n$, this product converges absolutely in the range $|u| < \min\{|z|^{-1}, q^{-1}\}$. A direct application of the method makes use of the identity/definition
$$G_z(u,\chi) = L(u,\chi)^z F_z(u,\chi)$$
where
$$F_z(u,\chi):=\prod_{p \in\mathcal{P}}(1-z \chi(p) u^{\deg p})^{-1})(1-\chi(p) u^{\deg p})^z$$
and this product converges absolutely for $|u| < \min\{|z|^{-1}, q^{-1/2}\}.$
The crucial identity/definition we shall make use of is
\begin{equation}
G_z(u,\chi) = L(u,\chi)^{z}L(u^2,\chi^2)^{\frac{z(z-1)}{2}} E_{z}(u,\chi)
\end{equation}
where $$E_{z}(u,\chi) = \prod_{p \in\mathcal{P}}\left(1-z \chi(p) u^{\deg p}\right)^{-1}\left(1-\chi(p)u^{\deg p}\right)^{z}\left(1-\chi^2(p)u^{2 \deg p}\right)^{\frac{z(z-1)}{2}}$$
and this product converges absolutely for $|u| < \min\{|z|^{-1}, q^{-1/3}\}.$ This gives an explicit representation of the Dirichlet series for $\chi(f)z^{\Omega(f)}$ beyond the radius $|u| = q^{-1/2}$ and will allow us to extract an explicit formula for $M_z(n,\chi)$ in terms of the zeros of $L(u,\chi)$ which are on the circle $|u| = q^{-1/2}$.
The key point being that $E_z(u,\chi)$, and hence $G_z(u,\chi)$, is holomorphic for $|u|<\min\{|z|^{-1}, q^{-1/3} \}$. This follows because in that range
$$|1-\chi(p)zu^{\deg p}| \geq 1 - |zu^{\deg p}| \geq 1-|zu|>0$$ so the factors $(1-\chi(p)z u^{\deg p})^{-1}$ have no poles and
\begin{align*}
&\left(1-zT\right)^{-1}\left(1-T\right)^{z}\left(1-T^2\right)^{\frac{z(z-1)}{2}} \\
&= \left(1+zT+z^2T^2 + O(T^3)\right)\left(1-zT+\frac{z(z-1)}{2}T^2+ O(T^3)\right)\left(1-\frac{z(z-1)}{2}T^2+O(T^4)\right) \\
&= 1+ O(T^3)
\end{align*}
so each factor in the Euler product of $E_z(u,\chi)$ is $1+O(u^{3\deg p})$, hence the product converges absolutely and defines a holomorphic fucntion for $|u|<\min\{|z|^{-1}, q^{-1/3} \}$.

Before starting the proof, let's take a minute to be clear about what these expressions mean. Having factored $L(u,\chi)$ over its zeros $\rho$ as
$$L(u,\chi)=\prod_{\rho \: : \: L(\rho,\chi) = 0}(1-u/\rho)^{m_{\rho}}$$ as in equation (1), we define
$$L(u,\chi)^z:=\exp\left(z \sum_{\rho}m_{\rho}\log(1-u/\rho)\right)$$
where $\log$ is defined on the set $\mathbb{C}\backslash [0,-\infty)$ and takes real values on the positive reals and define $L(u^2,\chi^2)^{z(z-1)/2}$ similarly. These expressions define functions $L(u,\chi)^z$ and $L(u^2,\chi^2)^{z(z-1)/2}$, holomorphic in $u$ and $z$ for all $z \in \mathbb{C}$ and all $u \in\mathbb{C} \backslash \bigcup_{\rho}\{u \: : \: 1-u/\rho \in \mathbb{R}_{\leq 0}\}$.

The formula for $M_z(n,\chi)$ we need is given by Proposition~\ref{prop4.1} below. First though we prove a simple integral lemma.

\begin{lemma}\label{hanlem}
	Let $A,\delta>0$. Let $\mathcal{H}$ be the Hankel contour of radius 1 around 0 going along the negative real axis to $-n\delta$. Uniformly for $|z|\leq A$ we have
	$$\frac{1}{2\pi i}\int_{\mathcal{H}}w^{z}\frac{dw}{(1-w/n)^{n+1}} = \frac{1}{\Gamma(-z)} + O(1/n)$$
\end{lemma}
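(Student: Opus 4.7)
The plan is to compare the integrand to $w^z e^w$ and invoke Hankel's classical representation
$$\frac{1}{\Gamma(-z)} = \frac{1}{2\pi i}\int_{\mathcal{H}_\infty} w^z e^w\, dw,$$
where $\mathcal{H}_\infty$ denotes our contour prolonged along the negative real axis to $-\infty$. Two errors then have to be controlled: truncating $\mathcal{H}_\infty$ down to $\mathcal{H}$ in the Hankel integral, and replacing $(1-w/n)^{-(n+1)}$ by $e^w$ on $\mathcal{H}$ itself. The truncation is immediate: on $\mathcal{H}_\infty\setminus\mathcal{H}$, $w$ lies on the negative real axis with $|w|>n\delta$, so $|w^z e^w| \ll |w|^A e^{-|w|}$, giving a tail of size $O(e^{-n\delta/2})$, far smaller than $1/n$.

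For the second comparison, write $(1-w/n)^{-(n+1)} = e^w \cdot e^{g(w,n)}$ with
$$g(w,n) \ :=\ -(n+1)\log(1-w/n) - w \ =\ (n+1)\sum_{k\geq 1}\frac{w^k}{kn^k} - w,$$
and split $\mathcal{H} = \mathcal{H}_1 \cup \mathcal{H}_2$ at the threshold $|w|=n^{1/3}$. On the inner part $\mathcal{H}_1$ we have $|w/n| \leq n^{-2/3}$, so the series converges absolutely; isolating the $k=1$ term and bounding the rest as a geometric series yields $|g(w,n)| \ll |w|/n + |w|^2/n \ll n^{-1/3} = o(1)$. Hence $|e^g - 1| \leq 2|g| \ll |w|^2/n$ (using $|w| \geq 1$ on the straight pieces and absorbing the circle into a constant), and
$$\int_{\mathcal{H}_1} |w^z|\,e^{\Re w}\,|e^{g} - 1|\,|dw| \ \ll\ \frac{1}{n}\int_{\mathcal{H}} |w|^{A+2} e^{\Re w}\,|dw| \ =\ O(1/n),$$
since the last integral is bounded (circular piece is $O(1)$, straight pieces reduce to an incomplete-$\Gamma$ integral of fixed exponent).

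On the outer tail $\mathcal{H}_2$, bound the two terms of the difference separately. Writing $w=-t$ with $n^{1/3} < t \leq n\delta$, we have $|e^w| = e^{-t}$, and by concavity of $x\mapsto\log(1+x)$ on $[0,\delta]$, $\log(1+x) \geq x\,\log(1+\delta)/\delta$ for $0\leq x\leq \delta$, so $(1+t/n)^{-(n+1)} \leq e^{-c_\delta t}$ with $c_\delta := \log(1+\delta)/\delta > 0$. Therefore
$$\int_{\mathcal{H}_2} |w^z|\bigl(|e^w| + (1+t/n)^{-(n+1)}\bigr)\,|dw| \ =\ O\bigl(e^{-c_\delta n^{1/3}/2}\bigr),$$
exponentially small. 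Summing the three estimates yields the claim. The only non-trivial obstacle is bookkeeping: choosing the cut-off $n^{1/3}$ so that both the Taylor remainder on $\mathcal{H}_1$ and the tail on $\mathcal{H}_2$ beat $1/n$, and keeping careful track of the two branches of $w^z$ on either side of the negative real axis.
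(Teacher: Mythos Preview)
Your proof is correct and follows essentially the same strategy as the paper's: reduce to Hankel's integral for $1/\Gamma(-z)$, control the truncation tail, and estimate the difference $(1-w/n)^{-(n+1)}-e^{w}$ after splitting the contour into a near part and a far part. The only cosmetic difference is your cut-off at $|w|=n^{1/3}$ rather than at $\Re w=-n/2$; your explicit tracking of the $|w|^{2}/n$ remainder and subsequent integration of $|w|^{A+2}e^{\Re w}$ is in fact tidier than the paper's somewhat casual assertion that $-(n+1)\log(1-w/n)=w+O(1/n)$ holds uniformly on the whole inner region.
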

\begin{proof}
	We may suppose $n$ is sufficiently large. By Corollary 0.18 from~\cite{Ten} we have
	$$\frac{1}{2\pi i}\int_{\mathcal{H}}w^ze^w dw = \frac{1}{\Gamma(-z)} + O(e^{-n\delta/2})$$
	so it suffices to show that
	$$\int_{\mathcal{H}}\left|w^z\left(\frac{1}{(1-w/n)^{n+1}}-e^{w}\right)\right||dw| = O(1/n).$$
	On the region $\Re w > -n/2$ we have
	$$\frac{1}{(1-w/n)^{n+1}}-e^{w} = e^{-(n+1)\log(1-w/n)} -e^{w} = e^{w + O(1/n)} -e^{w} = O(e^{w}/n) $$
	and $\int_{\mathcal{H}}\left|w^z e^{w}\right||dw| = O(1).$ On the rest of the integral
	$$\int_{-n/2}^{-n\delta}\left|w^{z}\left(\frac{1}{(1-w/n)^{n+1}}-e^w\right) \right||dw| \ll n^{A+1}e^{- c n}$$ for some $c>0$ when $|z|\leq A$. This proves the lemma.
\end{proof}

\begin{prop}\label{prop4.1}
	For all $\epsilon>0$ the following holds uniformly for $|z|\leq q^{1/2-\epsilon}$. If $\chi^2 \neq \chi_0$,
	$$M_z(n,\chi) =  \sum_{\substack{\rho \: : \\ L(\rho, \chi) = 0 \\ |\rho|=q^{-1/2}}  }\rho^{-n}n^{-z m_\rho-1}\left\{\frac{F_z(\rho,\chi)c_\rho^z}{\Gamma(-z m_\rho)} + O\left(n^{-1} \right) \right\}$$
	and if $\chi^2=\chi_0$,
	\begin{multline*}
	M_z(n,\chi) =  \sum_{\substack{\rho \: : \\ L(\rho, \chi) = 0 \\ |\rho|=q^{-1/2} \\ \rho \neq \pm q^{-1/2}  }}  \rho^{-n}n^{-z m_\rho-1}\left\{\frac{F_z(\rho,\chi)c_\rho^z}{\Gamma(-z m_\rho)} + O\left(n^{-1} \right) \right\} \\
	+ \sum_{\pm} (\pm 1)^n q^{n/2}n^{-1 -zm_{\pm} +z(z-1)/2 }\left\{\frac{E_z(\pm q^{-1/2},\chi)c_{\pm}^z \left(\frac{\phi(d)}{2q^{\deg d}} \right)^{z(z-1)/2}}{\Gamma(-zm_{\pm}+z(z-1)/2)}  + O\left(n^{-1}\right) \right\}
	\end{multline*}
	for some constants $c_{\rho}$, $c_{\pm}$ defined in the course of the proof.
\end{prop}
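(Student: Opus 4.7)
The plan is to extract $M_z(n,\chi)$ via Cauchy's integral formula,
$$M_z(n,\chi) = \frac{1}{2\pi i}\oint_{|u|=r} G_z(u,\chi)\,\frac{du}{u^{n+1}},$$
starting with a small circle $|u| = r$ inside the region of absolute convergence $|u| < \min\{|z|^{-1}, q^{-1}\}$. After substituting the identity $G_z = L(u,\chi)^z L(u^2,\chi^2)^{z(z-1)/2} E_z(u,\chi)$, the integrand is holomorphic on $|u| < \min\{|z|^{-1}, q^{-1/3}\}$ apart from branch-type singularities at the zeros $\rho$ of $L(u,\chi)$ on $|u|=q^{-1/2}$, together with additional singularities at $u = \pm q^{-1/2}$ coming from $L(u^2,\chi^2)^{z(z-1)/2}$ when $\chi^2 = \chi_0$. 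Since $|z| \leq q^{1/2-\epsilon}$ forces $|z|^{-1} \geq q^{-1/2+\epsilon}$, one may deform the contour outward onto a circle $|u| = R$ with $q^{-1/2} < R < \min\{|z|^{-1}, q^{-1/3}\}$, wrapping it around each such point via a small Hankel-shaped detour. The circular portion contributes $O(R^{-n})$, negligible compared to the expected main terms of size $q^{n/2}$ times powers of $n$.

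For a generic zero $\rho$ on $|u| = q^{-1/2}$ (distinct from $\pm q^{-1/2}$ when $\chi^2 = \chi_0$), factor locally $L(u,\chi) = (1-u/\rho)^{m_\rho}\ell_\rho(u)$ with $\ell_\rho$ holomorphic and non-vanishing near $\rho$, so that $L(u,\chi)^z = (1-u/\rho)^{zm_\rho}\ell_\rho(u)^z$ in a consistent choice of branch. The change of variable $u = \rho(1-w/n)$ sends the local Hankel detour onto the contour $\mathcal{H}$ of Lemma~\ref{hanlem} and recasts the contribution as
$$\pm\,\rho^{-n}\,n^{-zm_\rho-1}\cdot \frac{1}{2\pi i}\int_{\mathcal{H}} w^{zm_\rho}\,\frac{A_\rho(w)}{(1-w/n)^{n+1}}\,dw,$$
where $A_\rho(w) = \ell_\rho(u)^z L(u^2,\chi^2)^{z(z-1)/2} E_z(u,\chi)$ evaluated at $u = \rho(1-w/n)$. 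Using that $F_z(u,\chi) = L(u^2,\chi^2)^{z(z-1)/2}E_z(u,\chi)$ (from comparing the two factorizations of $G_z$) and setting $c_\rho = \ell_\rho(\rho)$ so that $A_\rho(0) = F_z(\rho,\chi)\,c_\rho^z$, one Taylor-expands $A_\rho(w) = A_\rho(0) + O(|w|/n)$ and applies Lemma~\ref{hanlem} to the leading term (together with routine bounds on the tail of $\mathcal{H}$) to obtain exactly $\rho^{-n}n^{-zm_\rho-1}\{F_z(\rho,\chi)c_\rho^z/\Gamma(-zm_\rho) + O(n^{-1})\}$.

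When $\chi^2 = \chi_0$, the factor $L(u^2,\chi^2)^{z(z-1)/2}$ is no longer holomorphic at $u = \pm q^{-1/2}$: since $L(u,\chi_0) = (1-qu)^{-1}\prod_{p\mid d}(1-u^{\deg p})$ has a simple pole at $u = 1/q$, one has $L(u^2,\chi_0) \sim (\phi(d)/(2q^{\deg d}))/(1\mp\sqrt{q}\,u)$ as $u \to \pm q^{-1/2}$. Combined with $(1\mp\sqrt{q}\,u)^{zm_\pm}$ coming from $L(u,\chi)^z$, the integrand has a local branch of exponent $zm_\pm - z(z-1)/2$ there. Writing $L(u,\chi) = (1\mp\sqrt{q}\,u)^{m_\pm}\ell_\pm(u)$ with $\ell_\pm$ holomorphic and non-vanishing, setting $c_\pm = \ell_\pm(\pm q^{-1/2})$, and performing the same change of variable $u = \pm q^{-1/2}(1-w/n)$ reduces the contribution again to Lemma~\ref{hanlem}, this time with $\Gamma(-zm_\pm + z(z-1)/2)$ in the denominator and $E_z(\pm q^{-1/2},\chi)$, $c_\pm^z$, and $(\phi(d)/(2q^{\deg d}))^{z(z-1)/2}$ appearing in the numerator, producing the second sum in the statement.

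The main obstacle I foresee is the bookkeeping around branch choices: one must check that the global logarithmic conventions for $L(u,\chi)^z$ and $L(u^2,\chi^2)^{z(z-1)/2}$ set up in the excerpt are compatible with the local factorizations used at each branch point, and that the $O(n^{-1})$ errors (from the Taylor remainder for $A_\rho$ together with the tail of the Hankel contour) are uniform in $z$ throughout $|z| \leq q^{1/2-\epsilon}$. The exceptional points $\pm q^{-1/2}$ in the case $\chi^2 = \chi_0$ demand particular care, as two distinct branch phenomena must be combined before the local analysis can proceed.
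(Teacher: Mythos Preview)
Your proposal is correct and follows essentially the same route as the paper's proof: Cauchy's formula, deformation of the contour past $|u|=q^{-1/2}$ with Hankel detours around each singularity, the substitution $u=\rho(1-w/n)$ reducing each local contribution to Lemma~\ref{hanlem}, and the separate treatment of $\pm q^{-1/2}$ when $\chi^2=\chi_0$ via the pole of $L(u^2,\chi_0)$. The only cosmetic differences are that the paper fixes the outer radius to be $q^{\epsilon/2-1/2}$ (after shrinking $\epsilon$ if necessary so that this is below $q^{-1/3}$) and expands $F_z(u,\chi)$ directly about $\rho$ rather than packaging everything into a single amplitude $A_\rho(w)$, but the computations are the same.
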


\begin{proof}
	\begin{figure}[H]
		{
			\centering
			\includegraphics[scale=0.5]{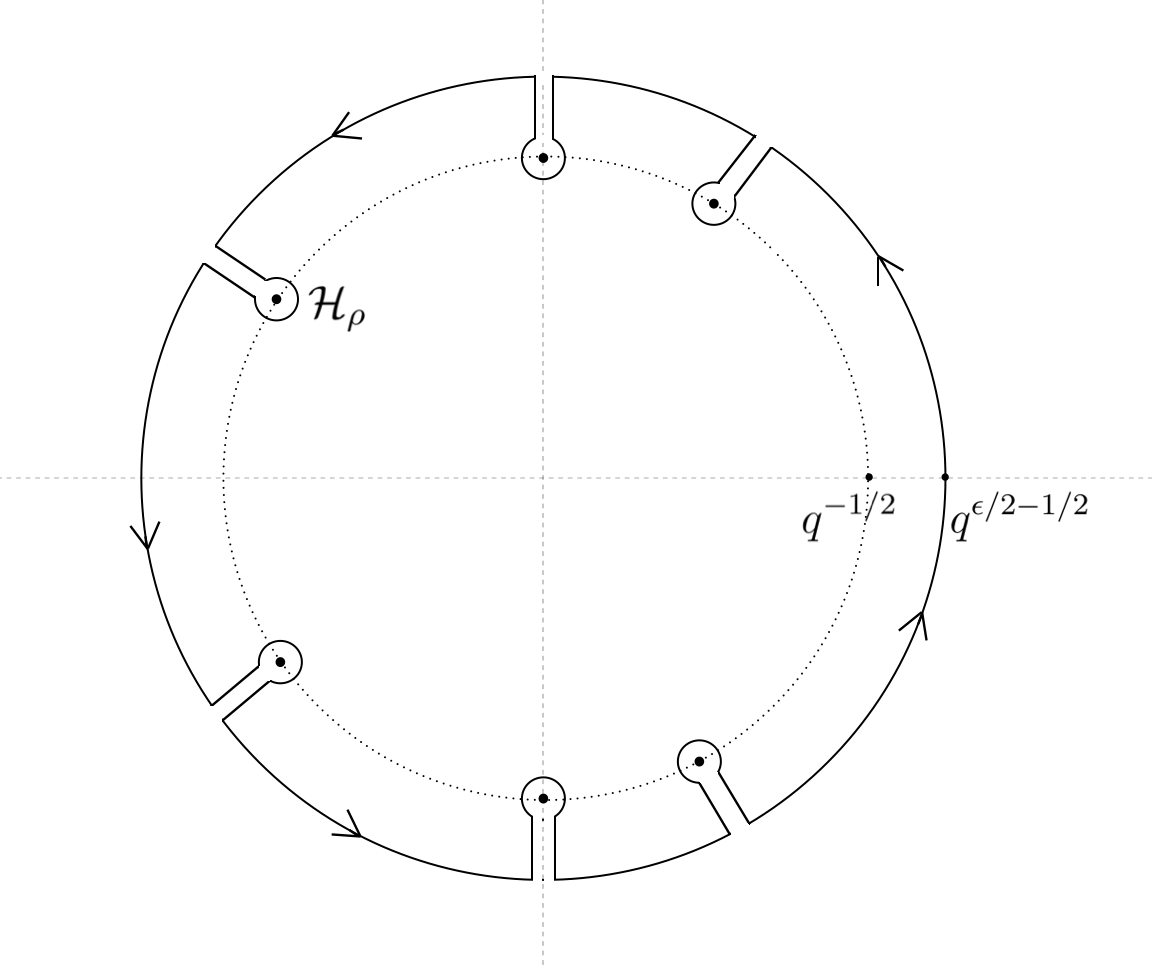}
			\caption{Contour of integration}
		}
	\end{figure}
	Applying Cauchy's integral formula with $r<q^{-1/2}$ gives
	\begin{equation}
	M_z(n,\chi) = \frac{1}{2\pi i}\int_{|u|=r}G_\chi(u,z) \frac{du}{u^{n+1}}.
	\end{equation}
	We shift this contour to write the left hand side in terms of the singularities of $G_z(u,\chi)$ with $|u|=q^{-1/2}$. If $\chi^2 \neq \chi_0$, these $\rho$ consist just of the zeros of $L(u,\chi)$. If $\chi^2 = \chi_0$ however, we also have to include the points $\pm q^{-1/2}$. For each such singularity $\rho$, let $\mathcal{H}_\rho$ be the contour that consists of a circle of radius $1/n$ traversed clockwise around $\rho$ and the two line segments on the ray from 0 to $\rho$ joining this small circle to the circle $|u|=q^{\epsilon/2-1/2}$. We may replace $\epsilon$ by $\min\{1/10,\epsilon\}$ if necessary to ensure $q^{\epsilon/2-1/2}<q^{-1/3}$.
Then (5) becomes
	$$
	\sum_{ \rho \in \mathcal{S}}\frac{1}{2\pi i}\int_{\mathcal{H}_\rho}G_z(u,\chi) \frac{du}{u^{n+1}} + O\left(\int_{|u|=q^{\epsilon/2 - 1/2}} \left| G_{z}(u,\chi)\frac{du}{u^{n+1}} \right| \right)
	=:\sum_{\rho \in \mathcal{S}} I_\rho + O\left(q^{n(\frac{1}{2}-\epsilon/2)}\right)
	$$
	Here we have used the fact that $G_z(u,\chi)$ is uniformly bounded in the region $|u|\leq q^{\epsilon/2-1/2}$, $|z|\leq q^{1/2-\epsilon}$. Let us now evaluate each of these Hankel contours, first the non-real $\rho$, then $\rho = \pm q^{-1/2}$.
	
	\hspace{-0.9cm}\textbf{Case 1:} Non-real $\rho$.
	
	Since $F_z(u,\chi)$ is holomorphic in $u$ on $\mathcal{H}_\rho$ we may use the approximation  $F_z(u, \chi) = F_z(\rho,\chi)+O(|u-\rho|)$ for the singularities $\rho = q^{-1/2}e^{-i\gamma_j} \neq \pm q^{-1/2}$ to get
	\begin{align*}
	I_{\rho}  =  F_z(\rho,\chi)\frac{1}{2\pi i}\int_{\mathcal{H}_{\rho}}L(u,\chi)^{z}\frac{du}{u^{n+1}}
	+ O\left(\int_{\mathcal{H}_{\rho}}|u-\rho||L(u,\chi)^z| \frac{|du|}{|u|^{n+1}} \right).
	\end{align*}
	In the error term, change variable to $w=n(1-u/\rho)$ so $u=\rho(1-w/n)$ to get
	\begin{align*}
	\int_{\mathcal{H}_{\rho}}\left|(1-u/\rho)^{1+zm_{\rho}}\left(\frac{L(u,\chi)}{(1-u/\rho)^{m_{\rho}}}\right)^z\right|\frac{|du|}{|u|^{n+1}}& \ll q^{n/2}|n^{-2-z m_{\rho}}|\int_{\mathcal{H}}|w^{1+z  m_{\rho}}|\frac{|dw|}{|(1-w/n)^{n+1}|} \\ & \ll q^{n/2}|n^{-2-zm_{\rho}}|
	\end{align*}
	since $|L(u,\chi)/(1-u/\rho)^{m_{\rho}}|$ is bounded above and below by positive constants on the contour of integration. Here $\mathcal{H}$ is the contour from Lemma 1 with $\delta = q^{\epsilon/2} - 1$.
	
	To evaluate the main term, again change to the variable $w=n(1-u/\rho)$ in the first integral above to get
	\begin{align*}
	\frac{\rho^{-n}n^{-1}}{2\pi i}\int_{\mathcal{H}}L(\rho(1-w/n),\chi)^z\frac{dw}{(1-w/n)^{n+1}}.
	\end{align*}
	Now if we define $c_\rho\neq 0$ by $L(u,\chi) = (1-u/\rho)^{m_\rho}(c_\rho + O(|1-u/\rho|))$, or equivalently, as  $c_{\rho}=L(u,\chi)/(1-u/\rho)^{m_{\rho}}\big|_{u=\rho}$ and $c_{\rho}^z=\exp(z \sum_{\rho'\neq \rho}m_{\rho'}\log(1-\rho/\rho'))$, then $$\left( \frac{L(u,\chi)}{(1-u/\rho)^{m_{\rho}}} \right)^z = c_{\rho}^z + O(|1-u/\rho|)$$
	and
	$$L(\rho(1-w/n),\chi)^z = (w/n)^{ z m_{\rho}}c_{\rho}^z + O(|w/n|^{z m_{\rho}+1}).$$
	By Lemma~\ref{hanlem}, this gives as main term
	$$\frac{c_\rho^z\rho^{-n}n^{-zm_\rho-1}}{2\pi i} \int_{\mathcal{H}}w^{z m_\rho}\frac{dw}{(1-w/n)^{n+1}} = \frac{ c_\rho^z\rho^{-n}n^{-z m_\rho-1}}{\Gamma(-z m_\rho)} + O(q^{n/2}|n^{-zm_{\rho}-2}|)$$
	and another error term bounded by
	$$\ll q^{n/2}|n^{-z m_{\rho}-2}|\int_{\mathcal{H}}|w^{z m_{\rho}+1}|\frac{|dw|}{|(1-w/n)^{n+1}|} \ll q^{n/2}|n^{-z m_{\rho}-2}|.$$

	\hspace{-0.9cm}\textbf{Case 2:} Real $\rho$.
	
	Now let's look at the singularities $\pm q^{-1/2}$. If $\chi^2$ is not principal, then exactly the same argument still works with $\rho = \pm q^{-1/2}$ and the convention $m_{\pm q^{-1/2}}=m_{\pm}$ because in that case $F_z(u, \chi) = L(u^2,\chi^2)E_z(u,\chi)$ is holomorphic near $u = \pm q^{-1/2}$.
	
	Suppose then that $\chi^2=\chi_0$ is principal. We now have to worry about the extra poles of $L(u^2,\chi_0)$ at $u=\pm q^{-1/2}$. First near $u = q^{-1/2}$, change to the variable $w = n(1-q^{1/2}u)$ so $u= q^{-1/2}(1-w/n)$. Then we want to evaluate
	$$\frac{q^{n/2}n^{-1}}{2\pi i}\int_{\mathcal{H}}L\left(\frac{1-w/n}{q^{1/2}},\chi\right)^z L\left(\frac{(1-w/n)^2}{q},\chi_0\right)^{\frac{z(z-1)}{2}}E_z\left(\frac{1-w/n}{q^{1/2}},\chi\right) \frac{dw}{(1-w/n)^{n+1}} $$
	Defining $c_{+} \neq 0 $ by $L(u,\chi)/(1-uq^{1/2})^{m_{+}} = c_{+} + O(|1-uq^{1/2}|)$, along the contour of integration we have
	$$
	L\left(\frac{1-w/n}{q^{1/2}},\chi\right)^z = (w/n)^{z m_{+}}c_{+}^z+O(|w/n|^{z m_{+} +1})
	$$
	and since $L(u, \chi_0)=\frac{1}{1-qu}\prod_{p|d}(1-u^{\deg p})$ and $\prod_{p|d}(1-q^{-\deg p}) = q^{-\deg d}\phi(d)$ we have $$L\left(\frac{(1-w/n)^2}{q},\chi_0\right)^{\frac{z(z-1)}{2}} = \left(\frac{n}{2w}\frac{\phi(d)}{q^{\deg d}}\right)^{\frac{z(z-1)}{2}} + O(|n/w|^{\frac{z(z-1)}{2}-1}).
	$$
	We also have,
	$$E_z\left(\frac{1-w/n}{q^{1/2}},\chi\right) = E_{z}(q^{-1/2},\chi)+ O(|w/n|)$$
	which together give the main term as
	\begin{multline*}
	q^{n/2}n^{-1 -zm_{+} +z(z-1)/2 }E_z(q^{-1/2},\chi)c_{+}^z \left(\frac{\phi(d)}{2q^{\deg d}} \right)^{z(z-1)/2}\frac{1}{2\pi i}\int_{\mathcal{H}}\frac{w^{z m_{+} - z(z-1)/2}}{(1-w/n)^{n+1}} dw \\
	= q^{n/2}n^{-1 -zm_{+} +z(z-1)/2 }\frac{E_z(q^{-1/2},\chi)c_{+}^z \left(\frac{\phi(d)}{2q^{\deg d}} \right)^{z(z-1)/2}}{\Gamma(-zm_{+}+z(z-1)/2)}
	+ O\left(q^{n/2}|n^{-zm_{+}+z(z-1)/2-2}|\right)
	\end{multline*}
	with the error terms all being $O(q^{n/2}|n^{-zm_{+}+z(z-1)/2-2}|).$
	
	The $-q^{-1/2}$ term is essentially the same. Together this proves Proposition~\ref{prop4.1}.
\end{proof}

\section{Saddle point lemmas} 
The following two lemmas allow us to deduce a formula for $\pi_k(n,\chi)$ from one for $M_z(n,\chi)$.
\begin{lemma}\label{l4.1}
	Let $A>0$ and let $p_n(z)=\sum_{k\geq 0}c_k(n)z^k$ be a sequence of polynomials such that uniformly for $n\geq 1$ and $|z|\leq A$
	\begin{equation}
	p_n(z)=n^{a z}\left(f(z) + O\left( n^{-1} \right)\right)
	\end{equation}
	for some real constant $a>0$ and some function $f(z)$ that is holomorphic on $\{z \in \mathbb{C} \: : \: |z|\leq A\}$. Then uniformly for $1\leq k \leq a A \log n,$
	$$c_k(n) = \frac{(a\log n)^k}{k!}\left(f\left(\frac{k}{a \log n}\right) + O\left(\frac{k}{(\log n)^2}\right)\right).$$
\end{lemma}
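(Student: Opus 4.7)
The strategy is to extract $c_k(n)$ from $p_n(z)$ via Cauchy's integral formula on the circle $|z| = r$ with $r := k/(a\log n)$, which is the saddle point of the integrand $n^{az}z^{-k-1}$ and satisfies $r \leq A$ throughout the stated range. Writing
\[ c_k(n) = \frac{1}{2\pi r^k}\int_{-\pi}^\pi p_n(re^{i\theta})e^{-ik\theta}\,d\theta, \]
the hypothesis $p_n(z) = n^{az}(f(z)+O(1/n))$ splits this into a main integral and an error. Since $|n^{az}| \leq e^k$ on $|z|=r$, the $O(1/n)$ error contributes at most $O(e^k/(nr^k))$, which by Stirling's $k!/k^k \sim e^{-k}\sqrt{2\pi k}$ equals $O(\sqrt{k}/n)\cdot (a\log n)^k/k!$, comfortably $o(k/(\log n)^2)$ times the normalising factor.

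For the main integral, direct coefficient extraction gives $[z^k](n^{az}f(z)) = \sum_{j=0}^k f_j(a\log n)^{k-j}/(k-j)!$, which upon factoring out $(a\log n)^k/k!$ reads
\[ \frac{(a\log n)^k}{k!}\sum_{j=0}^k f_j r^j T_j, \qquad T_j := \prod_{i=0}^{j-1}(1 - i/k). \]
The task reduces to showing $\sum_{j=0}^k f_j r^j T_j = f(r) + O(k/(\log n)^2)$. I split the difference as $E_1 + E_2$ with $E_1 = \sum_{j \leq k}f_j r^j(T_j - 1)$ and $E_2 = -\sum_{j > k}f_j r^j$. The tail $E_2$ is easy: since $f$ is holomorphic on the closed disk $\{|z|\leq A\}$ it extends to a slightly larger open disk $\{|z| < A'\}$ with $A' > A$, so Cauchy's estimates give $|f_j| \ll (A')^{-j}$ and hence $|E_2| \ll (r/A')^{k+1}$. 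A direct check (using $r \to 0$ when $k$ is bounded, and $r/A' \leq A/A' < 1$ when $k$ grows) shows this is $o(1/(\log n)^2)$ uniformly in $1 \leq k \leq aA\log n$.

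The main obstacle is a uniform bound on $E_1$: the naive Taylor expansion $T_j = 1 - j(j-1)/(2k) + O(j^4/k^2)$ is only valid when $j$ is much smaller than $k$, and pushing it through naively would force the error to deteriorate in intermediate ranges. I circumvent this with the uniform inequality
\[ 0 \leq 1 - T_j \leq \frac{j(j-1)}{2k}, \qquad 0 \leq j \leq k, \]
obtained from the recursion $T_{j+1} = T_j(1 - j/k)$ together with $T_j \in [0,1]$: telescoping gives $1 - T_j = k^{-1}\sum_{i=1}^{j-1}iT_i \leq j(j-1)/(2k)$. This immediately yields
\[ |E_1| \leq \frac{1}{2k}\sum_{j\geq 2}j(j-1)|f_j|r^j = \frac{r^2}{2k}\widetilde{f}''(r), \]
where $\widetilde{f}(x) := \sum_{j\geq 0}|f_j|x^j$ is holomorphic on $|z| < A'$, so $\widetilde{f}''(r) = O(1)$ uniformly for $r \leq A$. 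Since $r^2/k = k/(a^2(\log n)^2)$, this gives $E_1 = O(k/(\log n)^2)$, which combined with the estimates for $E_2$ and the $O(1/n)$ error completes the proof.
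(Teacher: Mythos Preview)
Your proof is correct, but it proceeds by a genuinely different route from the paper's. The paper applies the saddle-point method directly to the Cauchy integral: on the circle $|z|=r$ it Taylor-expands $f(z)=f(r)+f'(r)(z-r)+O(|z-r|^2)$, observes that the linear term integrates to zero precisely because $r=k/(a\log n)$ is the saddle point (so $\int (z-r)n^{az}z^{-k-1}\,dz=0$), and then bounds the quadratic remainder by estimating $\int_{|z|=r}|n^{az}(z-r)^2|\,|dz|$ via Laplace-type asymptotics. You instead compute the coefficient $[z^k](n^{az}f(z))$ exactly as the convolution $\sum_{j\leq k}f_j(a\log n)^{k-j}/(k-j)!$, factor out $(a\log n)^k/k!$, and control the discrepancy $\sum_j f_j r^j(T_j-1)$ via the elementary uniform inequality $0\leq 1-T_j\leq j(j-1)/(2k)$, which you derive neatly by telescoping the recursion for the falling-factorial ratio.

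Both approaches yield the same error $O(k/(\log n)^2)$. Your argument is more combinatorial and avoids any asymptotic analysis of oscillatory integrals; the paper's is the classical analytic saddle-point template. The paper's method has the advantage of transferring more directly to the quadratic-exponent setting of the next lemma (where $n^{az^2+bz}$ does not have such a clean power-series coefficient), whereas your convolution identity relies on the exponential generating function of $n^{az}$ being explicit. Your handling of the tail $E_2$ and of the $O(1/n)$ term matches the paper's in spirit and in strength.
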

\begin{proof}
	By Cauchy's integral formula,
	$$c_k(n) = \frac{1}{2\pi i}\int_{|z|=r}p_n(z)\frac{dz}{z^{k+1}}.$$
	We choose $r$ to minimise the trivial bound
	$$|c_k(n)| \ll r\cdot\max_{|z|=r}\left|\frac{n^{a z}}{z^{k+1}}\right| = \frac{n^{a r}}{r^{k}}.$$
	Since $a>0$, the maximum occurs at $z = r = k/(a\log n)$. From (6) we have
	$$c_k(n) = \frac{1}{2 \pi i}\int_{|z|=r}n^{ az }f(z)\frac{dz}{z^{k+1}} + O\left(E\right)$$
	where $E = \frac{1}{n} \int_{|z|=r}\left|\frac{n^{az}}{z^{k+1}}\right||dz|.$
	Using
	$$\frac{1}{2 \pi i}\int_{|z|=r} (z-r)n^{a z}\frac{dz}{z^{k+1}} = \frac{(a \log n)^{k-1}}{(k-1)!} - r\frac{(a \log n)^k}{k!} = 0$$ and the approximation
	$$f(z) = f(r) + f'(r)(z-r) + O\left(|z-r|^2\right)$$ this becomes
	$$c_k(n) = f(r)\frac{(a \log n)^{k}}{k!} + O\left(r^{-k-1}\int_{|z|=r}\left|n^{a z}(z-r)^2\right||dz| + E \right).$$
	The integral in the error term is bounded by
	$$
	r^{3}\int_{-\pi}^{\pi}|1-e^{i\theta}|^2e^{k\cos \theta} d\theta \ll r^3\int_{-\infty}^{\infty}\theta^2 e^{k(1-\theta^2/5)}d\theta \ll r^3e^{k}k^{-3/2}
	$$
	which contributes an error of
	\begin{align*}
	( k/(a\log n))^{2-k}e^{k}k^{-3/2}
	\ll a^{k}(\log n)^{k-2}\frac{e^{k}\sqrt{k}}{k^{k}} \ll a^k\frac{k(\log n)^{k-2}}{k!}
	\end{align*}
	by Stirling's formula. The other term $E$ in the error contributes at most
	$$\ll \frac{r^{-k-1}}{n} \int_{|z|=r}\left|n^{a(z-r)}\right||dz| \ll \frac{r^{-k}}{n}\int_{-1/2}^{1/2}n^{-art^2}dt \ll\frac{1}{n}(a\log n)^{k} \frac{e^{k}}{k^{k+1/2}} \ll\frac{1}{n} \frac{(a \log n)^{k}}{k!}$$
	again by Stirling's formula.
\end{proof}
We also need the following quadratic variant of Lemma 2.
\begin{lemma}\label{l4.2}
	Let $A>0$ and let $p_n(z)=\sum_{k\geq 0}c_k(n)z^k$ be a sequence of polynomials such that uniformly for $n\geq 1$ and $|z|\leq A$
	\begin{equation}
	p_n(z)=n^{a z^2 + bz}\left(f(z) + O\left( n^{-1} \right)\right)
	\end{equation}
	for some real constants $a>0$ and $b>0$ and some function $f(z)$ that is holomorphic on $\{z \in \mathbb{C} \: : \: |z|\leq A\}$ with $f(0)=1$. Let $r$ be the positive root of the quadratic
	$$r^2+\frac{b}{2a}r - \frac{k}{2a \log n} = 0.$$ Then uniformly for $1\leq k \leq \min\{(\log n)^{1/2},bA \log n\}$ we have
	\begin{enumerate}[label=(\alph*)]
		\item $$c_k(n) = \frac{(b \log n)^k}{k!}\left(1 + O\left(\frac{k^{2}}{\log n}\right) \right).$$
		\vspace{0.3cm}
		\hspace{-1.5cm} In the range $1\leq k \leq \min\{(\log n)^{2/3}, bA \log n \}$ we have
		\item $$c_k(n) = \frac{(b \log n)^k}{k!}e^{\frac{a k^2}{b^2 \log n}}\left(1 + O\left(\frac{1}{k} + \frac{k^3}{(\log n)^{2}} \right)\right).$$
		\vspace{0.3cm}
		\hspace{-1.5cm} In the range $1\leq k \leq 2aA^2 \log n $ we have
		\item $$c_k(n) = \frac{n^{ar^2+br}}{r^k}\left(\frac{f(r)}{\sqrt{2\pi(4ar^2+br)\log n}} + O\left((r \log n)^{-3/2}\right)\right).$$
	\end{enumerate}
\end{lemma}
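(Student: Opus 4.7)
The plan is to apply Cauchy's integral formula $c_k(n) = \frac{1}{2\pi i}\int_{|z|=r}p_n(z)\,dz/z^{k+1}$ on a well-chosen circle and perform saddle-point analysis, with the choice of $r$ dictated by the regime of $k$. For part (c), I would take $r$ to be the saddle-point radius from the statement (i.e.\ the positive critical point of $(az^2+bz)\log n - k\log z$), parametrise $z = re^{i\theta}$, and Taylor expand the phase as
$$(ar^2e^{2i\theta}+bre^{i\theta})\log n - ik\theta = (A+B) - \tfrac{1}{2}(4A+B)\theta^2 - \tfrac{i}{6}(8A+B)\theta^3 + O((A+B)\theta^4),$$
where $A := ar^2\log n$, $B := br\log n$ and the saddle relation $2A+B = k$ kills the linear term. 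Pulling $f(r)$ outside the integral (the odd-$\theta$ corrections from $f(re^{i\theta})-f(r)$ integrate to zero by symmetry) and performing the Gaussian integral gives the stated main term $f(r)/\sqrt{2\pi(4ar^2+br)\log n}$. The next-order saddle-point contributions ($X_4$ and $X_3^2/2$) produce a relative error of order $1/((4ar^2+br)\log n)$, which combined with the denominator $\sqrt{(4ar^2+br)\log n}$ in the main term yields the additive error $O((r\log n)^{-3/2})$.

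For part (a), the saddle radius of part (c) introduces an $O(1/k)$ relative error (essentially a Stirling artefact) that the sharper $O(k^2/\log n)$ bound cannot absorb when $k$ is small. I would instead use the smaller circle $|z|=r_0$ with $r_0 = k/(b\log n)$, which is the saddle for $n^{bz}/z^{k+1}$ alone. On this contour $|az^2\log n| \leq ak^2/(b^2\log n) = O(k^2/\log n)$, hence $e^{az^2\log n} = 1 + O(k^2/\log n)$ uniformly, and applying Lemma~\ref{l4.1} (with its constant $a$ replaced by $b$) to the coefficient $[z^k](n^{bz}f(z))$ gives the main term $\frac{(b\log n)^k}{k!}(f(k/(b\log n))+O(k/(\log n)^2))$. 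Since $f(0)=1$ implies $f(k/(b\log n)) = 1 + O(k/\log n)$, and $k/\log n \leq k^2/\log n$ throughout the range $k \geq 1$, all corrections collapse into the stated error.

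For part (b), a finer expansion of the part (c) formula is required. Solving $r^2 + (b/2a)r = k/(2a\log n)$ asymptotically gives $r = k/(b\log n) - 2ak^2/(b^3(\log n)^2) + O(k^3/(\log n)^3)$, and using the saddle identity $A+B = k/2 + br\log n/2$ one obtains
$$\frac{n^{ar^2+br}}{r^k} = e^k\left(\frac{b\log n}{k}\right)^{k}\exp\!\left(\frac{ak^2}{b^2\log n}\right)\bigl(1+O(k^3/(\log n)^2)\bigr).$$
Stirling's formula $e^k/k^k = \sqrt{2\pi k}/k!\cdot(1+O(1/k))$ converts this to $\frac{(b\log n)^k}{k!}e^{ak^2/(b^2\log n)}\sqrt{2\pi k}\,(1+O(1/k+k^3/(\log n)^2))$. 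Combined with $(4ar^2+br)\log n = k(1+O(k/\log n))$ in the denominator and $f(r) = 1 + O(k/\log n)$, the formula in (b) follows once one verifies that $k/\log n \leq 1/k + k^3/(\log n)^2$ throughout $1\leq k\leq (\log n)^{2/3}$ (immediate by splitting on whether $k \leq \sqrt{\log n}$). The part (c) additive error $(r\log n)^{-3/2}$ contributes a relative $O(1/k)$ and is absorbed.

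The main obstacle is the careful bookkeeping needed to confirm that, in each of the three regimes, all subdominant corrections---from Stirling, from $f(r) \neq f(0) = 1$, from the Gaussian variance differing from $k$, and from higher-order saddle-point terms---fit within the stated error budget. In particular, the unavoidable $O(1/k)$ error in part (b) is precisely the Stirling correction converting $e^k/k^k$ into $1/k!$; this is why part (a), whose error budget excludes any $1/k$ term, requires the separate argument at the smaller radius $r_0$ via Lemma~\ref{l4.1}.
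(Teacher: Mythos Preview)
Your proposal is correct and follows essentially the same approach as the paper: part (c) by saddle-point analysis on the circle $|z|=r$ with $r$ the positive root of $2ar^2+br=k/\log n$, part (a) by working instead on the smaller circle $r_0=k/(b\log n)$ and invoking Lemma~\ref{l4.1} together with $n^{az^2}=1+O(k^2/\log n)$, and part (b) by inserting the expansion $r=k/(b\log n)-2ak^2/(b^3(\log n)^2)+O(k^3/(\log n)^3)$ into the formula of (c) and applying Stirling. The only cosmetic difference is that the paper treats the $f'(r)(z-r)$ contribution in (c) by an explicit estimate of the corresponding integral (its $J_2$), whereas you dispose of it by a leading-order parity observation; both lead to the same $O((r\log n)^{-3/2})$ bound.
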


\begin{proof}
	Part (a) follows from the proof of Lemma~\ref{l4.1} with $r=k/(b \log n)$ and the approximation
	$$n^{a z^2} = 1+ O(k^2/\log n)$$ which holds for $|z| = r$. For part (c) we again use Cauchy's formula and the saddle point method. This time we choose $r$ to minimise the bound
	$$|c_k(n)| \ll r \cdot \max_{|z|=r}\left|\frac{n^{az^2+bz}}{z^{k+1}}\right| = \frac{n^{ar^2+br}}{r^k}.$$
	The maximum occurs at $z=r$ because $a>0$ and $b>0$. By differentiating, this is minimised when
	\begin{equation}
	r^2+\frac{b}{2a}r - \frac{k}{2a \log n} = 0, \hspace{1cm} \text{or} \hspace{1cm} r=\frac{b}{4a}\left(\sqrt{1+\frac{8ak}{b^2 \log n}}-1\right).
	\end{equation}
	Notice that in the range $k\leq 2aA^2 \log n$ we have
	$$r \leq \frac{b}{4a}\left(\sqrt{1+\frac{16a^2A^2}{b^2}}-1\right)\leq A$$
	since $\sqrt{1+x} \leq 1+\sqrt{x}$ for all $x>0$ so this is a valid choice for $r$.
	Now from (7) and
	$$f(z) = f(r) + f'(r) (z-r) + O\left(|z-r|^2\right)$$
	it follows that
	$$c_k(n) = f(r) I_1 + f'(r)I_2 + O(I_3) + O(E)$$
	where
	\begin{align*}
	I_1 &= \frac{1}{2 \pi i}\int_{|z|=r}\frac{n^{az^2+bz}}{z^{k+1}}dz \\
	I_2 &= \frac{1}{2 \pi i}\int_{|z|=r}\frac{n^{az^2+bz}}{z^{k+1}}(z-r)dz \\
	I_3 &= \int_{|z|=r}\left|\frac{n^{az^2+bz}}{z^{k+1}}(z-r)^2\right||dz| \\
	E &= \frac{n^{ar^2+br-1}}{r^k}.
	\end{align*}
	Writing $z=re^{2 \pi i t}$ and rearranging slightly this can we written
	$$c_k(n) = \frac{n^{ar^2 +br}}{r^k}\left\{ f(r)J_1 + r f'(r)J_2 + O(r^2 J_3) + O(1/n) \right\}$$
	where
	\begin{align*}
	J_1 &= \int_{-1/2}^{1/2}n^{(ar^2(e^{4 \pi i t}-1) + br(e^{2\pi i t}-1)}e^{-k 2\pi it}dt\\
	J_2 &=  \int_{-1/2}^{1/2}n^{(ar^2(e^{4 \pi i t}-1) + br(e^{2\pi i t}-1)}e^{-k 2\pi it}(1-e^{2\pi i t})dt\\
	J_3 &= \int_{-1/2}^{1/2}n^{(ar^2(\cos(4\pi t)-1) + br(\cos(2 \pi t)-1)}|1-e^{2\pi i t}|^2dt.
	\end{align*}
	The integrals $I_1$ and $I_2$ could be written explicitly as a sum of $k$ terms. Instead, we will asymptotically evaluate $J_1$ by expanding around the point $t=0$. This will give a main term and smaller error term provided $k \rightarrow \infty$. This is akin to approximating the integral $\frac{1}{2\pi i}\oint \frac{e^{z}}{z^{k+1}}dz=\frac{1}{k!}$ by $\frac{e^kk^{-k}}{\sqrt{2\pi k}}.$ For small values of $k$, part (a) is better.
	
	Let $\delta = (r \log n)^{-1/4}$. Using $e^{ix} = 1+ix-x^2/2-ix^3/6+O(x^4)$ for all $x$ and the definition of $r$ we have
	\begin{align*}J_1 &= \int_{-\delta}^{\delta}n^{ar^2(-8\pi^2t^2 -i32\pi^3t^3/3 +O(t^4) )+br(-2\pi^2 t^2 -i4\pi^3t^3/3 + O(t^4))}dt + O\left(n^{-br \delta^2}\right)\\
	&= \int_{-\delta}^{\delta}n^{-(4ar^2+br)2\pi^2t^2}(1+O(t^6(r\log n)^2 + t^4r \log n))dt + O\left(n^{-br \delta^2}\right) \\
	&= \frac{1}{\sqrt{2\pi (4 ar^2+br)\log n}} + O((r \log n)^{-3/2}).
	\end{align*}
	And in a similar vein,
	\begin{align*}
	J_2 &= \int_{-\delta}^{\delta}n^{ar^2(-8\pi^2t^2 -i32\pi^3t^3/3 +O(t^4) )+br(-2\pi^2 t^2 -i4\pi^3t^3/3 + O(t^4))}(-2\pi i t + O(t^2))dt  + O\left(n^{-br \delta^2}\right) \\
	&\ll \int_{-\delta}^{\delta}n^{-(4ar^2+br)2\pi^2t^2}(t^6(r \log n)^2 + t^4r \log n + t^2)dt + n^{-br \delta^2} \ll (r \log n)^{-3/2}
	\end{align*}
	and
	$$J_3 \ll \int_{-\infty}^{\infty}n^{-br t^2}t^2 dt \ll (r \log n)^{-3/2},$$
	which proves part (c).
	
	To prove part (b), we approximate $r$ and eliminate it from the expression given in part (c). Recall the definition of $r$
	\begin{equation}
	r = \frac{b}{4a}\left(\sqrt{1+\frac{8ak}{b^2 \log n}}-1\right) =\frac{k}{b \log n} - \frac{2ak^2}{b^3(\log n)^2} + O\left(\frac{k^3}{(\log n)^3}\right)
	\end{equation}
	so $k/2 \leq b r \log n \leq 2 k$ for $n$ sufficiently large and
	$$f(r) = f(0) + O(r) = 1 + O(k/\log n)$$
	and the expression from part (c) becomes
	$$\frac{n^{ar^2 + br}}{r^k}\frac{1}{\sqrt{2 \pi (4ar^2+br)\log n}}\left(1 + O \left(\frac{1}{k} + \frac{k}{\log n}\right)\right).$$
	Using the identity $ar^2+\frac{br}{2}-\frac{k}{2 \log n}=0$ and (7) the main term rearranges to
	\begin{multline*}
	\frac{n^{\frac{k}{2\log n}+\frac{br}{2}}r^{-k}}{\sqrt{2 \pi (2k-br \log n)}} = \frac{(b \log n)^ke^k e^{-\frac{ak^2}{b^2\log n} + O(\frac{k^3}{(\log n)^2})}}{\sqrt{2 \pi (k+\frac{2ak^2}{b^2 \log n}+O(\frac{k^3}{(\log n)^2})}}\left(k-\frac{2ak^2}{b^2 \log n} + O\big(\frac{k^3}{(\log n)^2}\big)\right)^{-k} \\
	= \frac{(b \log n)^ke^k}{k^k \sqrt{2 \pi k}} e^{-\frac{ak^2}{b^2\log n}}\left(1 + O(\frac{k}{\log n} + \frac{k^3}{(\log n)^2})\right)\left(1-\frac{2ak}{b^2 \log n} + O\big(\frac{k^2}{(\log n)^2}\big)\right)^{-k} \\
	=  \frac{(b \log n)^k}{k!} e^{\frac{ak^2}{b^2\log n}}\left(1 + O\big(\frac{1}{k} + \frac{k}{\log n} + \frac{k^3}{(\log n)^2}\big)\right).
	\end{multline*}
	Here we have used $(1 - x + O(y))^{-k} = e^{xk}(1+O(kx^2 + ky))$ when $kx^2, ky \ll 1$.
	Finally, we may leave out the $\frac{k}{\log n}$ term because $\frac{1}{k} + \frac{k^3}{(\log )^2} \geq \frac{k}{\log n}.$
\end{proof}

\section{Proofs of Theorems}

The proofs proceed by applying Lemmas~\ref{l4.1} and~\ref{l4.2} to Proposition~\ref{prop4.1}.

\subsection{Proof of Theorem~\ref{thm4.1}}
Suppose $\chi^2 \neq \chi_0$. We would like to apply Lemma~\ref{l4.1} to the polynomial
$$p_n(z)=\frac{1}{z}M_{-z}(n,\chi) =  - \sum_{f \in \mathcal{M}_n}\chi(f) (-z)^{\Omega(f)-1} = \sum_{k\geq 1}(-1)^k\pi_{k}(n,\chi)z^{k-1}$$
so that $(-1)^k\pi_{k}(n,\chi)$ is the coefficient of $z^{k-1}$. However, $p_n(z)$ isn't itself of the form required by Lemma~\ref{l4.1}. By Proposition~\ref{prop4.1}, it is a sum over $\rho$ of terms of the required form. But it is clear from the proof of Lemma~\ref{l4.1} that we can apply it to each summand separately which is what we shall do. So in our application of Lemma~\ref{l4.1} to the summand $\rho$ from Proposition~\ref{prop4.1}, we may take $A= q^{1/2-\epsilon}$ and have $a=m_\rho>0$ and
$$f(z)  = \frac{1}{z} \frac{F_{-z}(\rho, \chi)c_\rho^{-z}}{\Gamma(z m_\rho)} = m_\rho\frac{ F_{-z}(\rho, \chi) c_\rho^{-z}}{\Gamma(1+z m_\rho)}.$$
Then Theorem~\ref{thm4.1} follows after using
$$f((k-1)/(a \log n)) = f(0) + O(k/\log n) = m_\rho + O(k/(\log n)).$$

\subsection{Proof of Theorem~\ref{thm4.3}}
Suppose $\chi^2 = \chi_0$. With the same $p_n(z)$ as in the proof of Theorem~\ref{thm4.1}, this time we need Lemma~\ref{l4.1} and Lemma~\ref{l4.2} parts (a) and (b). Again, we don't apply these lemmas directly to $p_n(z)$, but it is clear from the proofs that we may apply them to each summand in Proposition 5 separately. For the $\rho \neq \pm q^{-1/2}$ terms we apply Lemma~\ref{l4.1} just as above. For the $\rho = \pm q^{-1/2}$ terms we apply Lemma~\ref{l4.2} part (a) in the range $1\leq k \leq (\log n)^{1/2}$ and part (b) in the range $1\leq k \leq (\log n)^{2/3}$ with $a = 1/2$, $b = m_{\pm} + 1/2$ and 
\begin{align*}
f(z) &=  \frac{E_{-z}(\pm q^{-1/2},\chi)c_{\pm}^{-z}\left(\frac{\phi(d)}{2q^{\deg d}}\right)^{z(z+1)/2}}{z\Gamma(zm_{\pm}+z(z+1)/2)} \\
&= (m_{\pm}+(z+1)/2) \frac{E_{-z}(\pm q^{-1/2},\chi)c_{\pm}^{-z}\left(\frac{\phi(d)}{2q^{\deg d}}\right)^{z(z+1)/2}}{\Gamma(1+zm_{\pm}+z(z+1)/2)}
\end{align*} so that $f(0) = m_{\pm} + 1/2$.

\subsection{Proof of Theorems~\ref{thm4.2} and~\ref{thm4.4}}

Suppose $m_j = 1$ for each $j$ and $m_{\pm} = 0$.
This time we apply Lemma~\ref{l4.1} and Lemma~\ref{l4.2} part (c) to the same $p_n(z)$ using Proposition~\ref{prop4.1}. For $\chi^2 \neq \chi_0$ and Theorem~\ref{thm4.2} we just apply the proof of Theorem~\ref{thm4.1} and the approximation
$$f((k-1)/(\log n)) = f(\alpha ) + o(1).$$
We see therefore that $h_j(\alpha) = F_{-\alpha}(\rho,\chi)c_{\rho}^{-\alpha}/\Gamma(1+\alpha)$ where $\rho = \alpha_j(\chi)^{-1}.$ Also, in the case that $m_{\rho}=1$, it follows from the definition of $c_{\rho}$ given in the proof of Proposition~\ref{prop4.1} that $c_{\rho} = -\rho L'(\rho, \chi) $.

For $\chi^2 = \chi_0$ and Theorem~\ref{thm4.4}, we have the two extra terms $\rho = \pm q^{-1/2}$. We can evaluate these with Lemma~\ref{l4.2} part (c) applied with $a = b = 1/2$ and
$$f_{\pm}(z)  =  \frac{E_{-z}(\pm q^{-1/2},\chi)c_{\pm}^{-z}\left(\frac{\phi(d)}{2q^{\deg d}}\right)^{z(z+1)/2}}{z\Gamma(z(z+1)/2)}.$$
Then $r>0$ satisfies
$$r^2+\frac{r}{2}-\frac{k-1}{\log n} = 0.$$ Since $k(n) \rightarrow \infty$ as $n \rightarrow \infty$ we also have $r \log n \rightarrow \infty$ as $n \rightarrow \infty$. Note that since $m_{\pm} = 0$, we have $c_{\pm } = L(\pm q^{-1/2},\chi)$. Therefore, using $\prod_{p|d}(1-q^{-\deg p}) = \phi(d)/q^{\deg d}$, one can check that $f_{\pm}(r) = h_{\pm}(\alpha)\sqrt{1+r^2/\alpha} + o(1)$ with $h_{\pm}(\alpha)$ defined as in the statement of Theorem 4. To finish the proof of Theorem~\ref{thm4.4} it therefore suffices to show that
\begin{equation}
\frac{n^{r^2/2+r/2}}{r^{k-1}}\frac{1}{\sqrt{2\pi(2r^2+r/2)\log n}} = (1+o(1))\frac{(\log n)^{k-1}}{(k-1)!}\frac{1}{\sqrt{1+r^2/\alpha}}n^{b((k-1)/\log n)}
\end{equation}
where $b$ is defined by (2) and check the stated conditions on the sign of $b$. This is a straightforward calculation using the definition of $r$ and Stirling's formula. The left hand side of~(10) is
\begin{multline*}
\frac{(\log n)^{k-1}}{(k-1)!}\frac{(k-1)!}{(r\log n)^{k-1}}\frac{n^{\frac{r}{4}+\frac{k-1}{2 \log n}}}{\sqrt{2\pi(k-1) (\frac{\log n}{k-1}r^2 + 1)}} \\
=  \frac{(\log n)^{k-1}}{(k-1)!}\frac{(k-1)!e^{k-1}}{(k-1 )^{k-1}}\frac{n^{\frac{r}{4}-\frac{k-1}{2\log n}+\frac{k-1}{\log n}\log(\frac{k-1}{r\log n})}}{\sqrt{2\pi(k-1) (\frac{\log n}{k-1}r^2 + 1)}} \\
=(1+o(1))\frac{(\log n)^{k-1}}{(k-1)!}\frac{n^{u\left(\frac{s-1}{2}-\log(2s)\right)}}{\sqrt{1+r^2/\alpha}}
\end{multline*}
where $u=(k-1)/\log n$ and $s=\frac{r}{2u}.$ Now $s$ is the positive root of the quadratic equation $$s^2+\frac{s}{4u}-\frac{1}{4u}=0$$
so
$$u = \frac{1-s}{4s^2}$$
and
$$s=\frac{1}{8u}\left(-1+\sqrt{1+16u}\right)$$
from which it follows that $0 \leq s \leq 1$. This proves (10) with $b$ defined by (3) since $\alpha = \lim_{n \rightarrow \infty} u$. Finally, it is easy to check the conditions on the sign of $b$ by noting that $\frac{s-1}{2}-\log(2s)$ is strictly decreasing on $(0,1)$ and equal to 0 at $s = \beta$ where $\beta$ is the unique solution to $\frac{\beta-1}{2}=\log (2\beta)$ with $0\leq \beta \leq 1$.

\section*{Acknowledgements}

We would like to thank Lucile Devin for helpful comments on an earlier draft of this paper and making available the paper~\cite{L-M}. This work was supported by the Engineering and Physical Sciences Research Council [EP/L015234/1] via the EPSRC Centre for Doctoral Training in Geometry and Number Theory (The London School of Geometry and Number Theory), University College London.

\end{document}